\documentclass[a4paper,11pt]{article}

\pdfoutput=1

\usepackage[a4paper]{geometry}
\usepackage{fullpage,mathtools,amssymb,amsthm,xcolor}
\usepackage[style=alphabetic,backref]{biblatex}
	\addbibresource{bibliography.bib}
\usepackage[colorlinks=true]{hyperref}
\usepackage[capitalize,nameinlink,noabbrev]{cleveref}

\crefname{subsection}{Subsection}{subsections}

\theoremstyle{plain}
	\newtheorem{thrm}{Theorem}[section]
	\newtheorem{lem}[thrm]{Lemma}
	\newtheorem{cor}[thrm]{Corollary}
	\newtheorem{prop}[thrm]{Proposition}

\theoremstyle{definition}

	\newtheorem{rmrk}[thrm]{Remark}
	
	\newtheorem{ex}[thrm]{Example}

\newcommand{\R}{\mathbb{R}}

\newcommand{\N}{\mathbb{N}}
\newcommand{\Z}{\mathbb{Z}}

\newcommand{\Tcal}{\mathcal{T}}

\newcommand{\prb}{\mathbb{P}}
\newcommand{\E}{\mathbb{E}}
\newcommand{\X}{\mathfrak{X}}
\newcommand{\Y}{\mathcal{Y}}
\newcommand{\Xcal}{\mathcal{X}}
\newcommand{\Zcal}{\mathcal{Z}}

\newcommand{\cov}{\mathrm{cov}}
\newcommand{\var}{\mathrm{var}}
\newcommand{\corr}{\mathrm{corr}}
\newcommand{\spn}{\mathrm{span}}

\newcommand{\ovl}{\overline}

\newcommand{\wt}{\widetilde}

\emergencystretch=1em

\title{\textbf{On Tail Triviality of Negatively Dependent Stochastic Processes}}
\author{Kasra Alishahi
\thanks{Department of Mathematical Sciences, Sharif University of Technology. Email: \protect\url{alishahi@sharif.edu}.}
\and
Milad Barzegar
\thanks{Department of Mathematical Sciences, Sharif University of Technology. Email: \protect\url{milad.barzegar@sharif.edu}.}
\and
Mohammadsadegh Zamani
\thanks{Department of Statistics, Yazd University. Email: \protect\url{zamani@yazd.ac.ir}.}
}
\date{}

\begin{document}
\maketitle

\begin{abstract}
We prove that every negatively associated sequence of Bernoulli random variables with ``summable covariances'' has a trivial tail $\sigma$-field. A corollary of this result is the tail triviality of strongly Rayleigh processes. This is a generalization of a result due to Lyons which establishes tail triviality for discrete determinantal processes. We also study the tail behavior of negatively associated Gaussian and Gaussian threshold processes. We show that these processes are tail trivial though they do not in general satisfy the summable covariances property. Furthermore, we construct negatively associated Gaussian threshold vectors that are not strongly Rayleigh. This identifies a natural family of negatively associated measures that is not a subset of the class of strongly Rayleigh measures.
\end{abstract}

\section{Introduction}\label{Sec:Intro}

Kolmogorov's celebrated zero-one theorem states that every sequence of independent random variables has a trivial tail $\sigma$-field---that is, the probability of every tail event is either zero or one. A natural question in this context is: what other general classes of stochastic processes satisfy tail triviality? It is known that a stochastic process is tail trivial if and only if its distant ``parts'' are asymptotically independent  (see \cref{Prop:Georgii}). Since negatively dependent measures cannot have an everywhere-strong dependence structure (see \cite{Alishahi2020Paving} for a discussion), it is reasonable to expect asymptotic independence, hence tail triviality, in various classes of negatively dependent stochastic processes.

Tail triviality has been established for some well-known negatively dependent processes: Pemantle in \cite{Pemantle1991Choosing}, among other results, proved tail triviality for the free and the wired uniform spanning forests on Euclidean lattices. Benjamini et al. in their seminal paper \cite{Benjamini2001Uniform}, extended this result to general graphs. Later, Lyons \cite{Lyons2003Determinantal} showed that all discrete determinantal processes are tail trivial. This result has since been extended to general determinantal point processes (see, for instance, \cite{Lyons2018A}).

In this paper, we focus on $\{0,1\}$-valued ``negatively associated'' stochastic processes. Negative association, introduced by Joag-Dev and Proschan in \cite{Joag-Dev1983Negative}, is a natural notion of negative dependence. A stochastic process $\X = (X_i)_{i \in \N}$ is \textit{negatively associated} if $\E[fg] \leq \E[f] \, \E[g]$ for every pair of increasing functions $f$ and $g$ that depend on disjoint subsets of the components of $\X$. We will use the abbreviation NA to denote negative association. Many theorems known for sequences of independent random variables have been generalized to NA processes. See, for example, \cite{Newman1984Asymptotic, Shao1999Law, Shao2000Comparison, Zhang2001Weak, Matula1992Note, Barbour1992Poisson}.

We will identify a sufficient condition for the tail triviality of NA sequences of Bernoulli random variables. Interestingly, this property only involves the covariance structure. A stochastic process $\X = (X_i)_{i \in \N}$ has \textit{summable covariances} if $\sum_{j=1}^\infty \big|\cov(X_i,X_j)\big| < \infty$ for every $i \in \N$. We also recall that the \textit{tail $\sigma$-field} of $\X$, denoted $\Tcal(\X)$, is the $\sigma$-field $\bigcap_{n=1}^\infty \sigma(X_i : i \geq n)$. In other words, the tail $\sigma$-field is the family of all events whose occurrence is not affected by changing the values of any finite number of $X_i$'s.

\begin{thrm}\label{Th:TailTriviality}
	Let $\X = (X_i)_{i \in \N}$ be a sequence of negatively associated Bernoulli random variables. If $\X$ has summable covariances then its tail $\sigma$-field is trivial---namely, $\prb(E) \in \{0,1\}$ for every $E \in \Tcal(\X)$.
\end{thrm}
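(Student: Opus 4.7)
The plan is to combine a Newman-type covariance inequality for NA Bernoulli variables with an $L^2$-approximation argument in the spirit of Kolmogorov's zero-one law; the summable-covariances hypothesis will enter only through the bound on mixed covariances.

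\textbf{Step 1: a covariance estimate for cylinder events.} The main lemma I aim to prove is that for any two finite disjoint index sets $I, J \subset \N$ and any cylinder events $A \in \sigma(X_i : i \in I)$ and $B \in \sigma(X_j : j \in J)$,
\[
\bigl|\cov(\mathbf{1}_A, \mathbf{1}_B)\bigr| \leq \sum_{i \in I}\sum_{j \in J} |\cov(X_i, X_j)|.
\]
The indicators $\mathbf{1}_A$ and $\mathbf{1}_B$ are coordinatewise $1$-Lipschitz in the natural $\{0,1\}$-sense, so this is a discrete instance of a Newman--Matula-type inequality. To prove it, I would replace the variables indexed by $J$ one at a time by independent copies, controlling each single-coordinate swap by the NA property restricted to the pair $(X_i, X_j)$; telescoping then yields the stated bound.

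\textbf{Step 2: passing to the tail.} Fix $n$, a cylinder $A \in \sigma(X_1, \ldots, X_n)$, and a tail event $E$. Because $E \in \sigma(X_j : j \geq m)$ for every $m > n$, I approximate $\mathbf{1}_E$ in $L^1$ by indicators of cylinders $B_M \in \sigma(X_j : m \leq j \leq M)$ as $M \to \infty$. Applying Step 1 to $A$ and $B_M$ and passing to the limit in $M$ gives
\[
\bigl|\cov(\mathbf{1}_A, \mathbf{1}_E)\bigr| \leq \sum_{i \leq n}\sum_{j \geq m} |\cov(X_i, X_j)|.
\]
The summable-covariances hypothesis forces the right-hand side to tend to $0$ as $m \to \infty$ for fixed $n$, so $\cov(\mathbf{1}_A, \mathbf{1}_E) = 0$. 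By linearity, $\cov(f, \mathbf{1}_E) = 0$ for every $f \in L^2(\sigma(X_1, \ldots, X_n))$ and every $n$.

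\textbf{Step 3: conclusion via martingale convergence.} Taking $f = \E[\mathbf{1}_E \mid X_1, \ldots, X_n]$ yields $\cov(\E[\mathbf{1}_E \mid X_1, \ldots, X_n], \mathbf{1}_E) = 0$ for every $n$. L\'evy's upward martingale theorem gives $\E[\mathbf{1}_E \mid X_1, \ldots, X_n] \to \mathbf{1}_E$ in $L^2$, so letting $n \to \infty$ gives $\var(\mathbf{1}_E) = \cov(\mathbf{1}_E, \mathbf{1}_E) = 0$. Hence $\mathbf{1}_E$ is almost surely constant, i.e., $\prb(E) \in \{0, 1\}$.

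\textbf{Main obstacle.} The hard part is Step 1. NA by itself supplies only the qualitative bound $\cov \leq 0$ for increasing indicators on disjoint coordinates, and cylinder indicators are in general neither increasing nor decreasing. Upgrading the qualitative NA inequality to the quantitative Newman-type bound for non-monotone indicators is where the real work lies; once that is in hand, the remainder is a routine $L^2$-approximation.
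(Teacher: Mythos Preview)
Your outline is essentially the paper's: establish a quantitative covariance bound for cylinder events on disjoint index sets, then feed in summable covariances and approximate. Your Steps~2--3 (approximation plus martingale convergence) are an equivalent repackaging of the paper's appeal to Georgii's criterion. The real divergence is in Step~1, and here the paper's argument is both different from yours and much shorter. The key observation is that for any event $E \in \sigma(X_i : i \in I)$ the function $\sum_{i\in I} X_i \pm \mathbf{1}_E$ is coordinatewise increasing, since flipping a single $X_i$ from $0$ to $1$ changes $\mathbf{1}_E$ by at most $1$. Applying NA to the four increasing pairs $\bigl(\sum_{i\in I} X_i \pm \mathbf{1}_A,\ \sum_{j\in J} X_j \pm \mathbf{1}_B\bigr)$ and adding in two ways yields
\[
|\cov(\mathbf{1}_A,\mathbf{1}_B)| \le \Bigl|\cov\Bigl(\textstyle\sum_{i\in I} X_i,\ \sum_{j\in J} X_j\Bigr)\Bigr| = \sum_{i\in I}\sum_{j\in J} |\cov(X_i,X_j)|
\]
in three lines (the last equality because NA forces all cross-covariances to be nonpositive). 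Your proposed coordinate-replacement route is harder to make work: NA gives you inequalities for increasing functions on disjoint blocks, not control over conditional laws, so swapping a single $X_j$ for an independent copy and bounding the increment ``by the NA property restricted to the pair $(X_i,X_j)$'' is not straightforward---the increment still involves all the other $X_{j'}$'s. The monotonization trick sidesteps that obstacle entirely and is exactly the missing idea you flagged.
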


\cref{Sec:SummableCovs} is devoted to the proof of \cref{Th:TailTriviality} and its consequences. The most important corollary of this theorem is the tail triviality of ``strongly Rayleigh processes''. These processes satisfy $\sum_j \big|\cov(X_i,X_j)\big| \leq 1/2$ for every $i \in \N$ (see \cref{Subsec:SR}). Strongly Rayleigh measures, introduced by Borcea, Br\"{a}nd\'{e}n and Liggett in \cite{Borcea2009Negative}, are a very important class of negatively dependent measures. They exhibit strong negative dependence properties, including NA, and they include almost all of the known examples of negatively dependent measures, most notably determinantal processes. Thus our result generalizes Lyons' result on the tail triviality of determinantal processes. See \cite{Michelen2019Central, Gharan2011Randomized, Anari2014Kadison, Branden2012Negative, Pemantle2014Concentration, Alishahi2020Paving, Ghosh2017Multivariate} for more information on strongly Rayleigh measures and their applications.

In \cref{Subsec:Stationary}, we prove that NA weakly stationary processes satisfy the summable covariances property and hence are tail trivial. This is a generalization of a result due to Lebowitz \cite{Lebowitz1972Bounds} that establishes ergodicity for NA strongly stationary processes---also, see \cite{Newman1984Asymptotic}.

In \cref{Sec:Gaussian}, we study NA ``Gaussian threshold processes''. In the first part of this section, we deviate from $\{0,1\}$-valued processes and prove that NA Gaussian processes are tail trivial. Since thresholding preserves tail triviality, it follows that NA Gaussian threshold processes are tail trivial. In \cref{Subsec:GaussianThreshold}, we show that a Gaussian process $(Z_i)_{i \in \N}$ has summable covariances if and only if the threshold process $\big(1_{\{Z_i \geq 0\}}\big)_{i \in \N}$ has summable covariances. We use this result to construct an NA Gaussian threshold process that does not satisfy the summable covariances property. Hence the summable covariances property is not a necessary condition for tail triviality. Furthermore, we use this example to show that NA Gaussian threshold vectors are not in general strongly Rayleigh. This answers a question posed by Pemantle who, motivated by finding natural examples of NA measures that are not strongly Rayleigh, asked whether Gaussian threshold measures are included in the class of strongly Rayleigh measures.

A natural question regarding \cref{Th:TailTriviality} is whether tail triviality holds under weaker assumptions. For instance, does tail triviality hold for all NA $\{0,1\}$-valued processes? We currently do not know the answer to this question, but the following example shows that pairwise independence (and so pairwise negative correlations) is not sufficient even under strong stationarity: Let $\Xcal = (X_i)_{i \in \Z}$ be a pairwise independent (but not mutually independent) strongly stationary $\{0,1\}$-valued process such that $\prb(X_i=0) = \prb(X_i=1) = 1/2$ for all $i$ (see \cite{Robertson1985Pairwise} for a construction of such processes). Also, let $\Y = (Y_i)_{i \in \Z}$ be an i.i.d sequence of symmetric Bernoulli variables that is independent from $\Xcal$. Choose $p \in (0,1)$ and define $\Zcal$ to be equal to $\Xcal$ with probability $p$ and equal to $\Y$ with probability $1-p$. Note that $\Zcal$ is a pairwise independent stationary process such that $\prb(Z_i=0) = \prb(Z_i=1) = 1/2$ for all $i$. Since the law of $\Zcal$ is a strict convex combination of the laws of $\Xcal$ and $\Y$, it is not ergodic and hence not tail trivial.

\section{Tail triviality and summable covariances property}\label{Sec:SummableCovs}

In this section we prove \cref{Th:TailTriviality} and present two important corollaries: tail triviality for strongly Rayleigh processes and tail triviality for NA stationary $\{0,1\}$-valued processes.

\subsection{Proof of \texorpdfstring{\cref{Th:TailTriviality}}{Theorem 1.1}}\label{Subsec:ProofOfTailTriviality}

A $\{0,1\}$-valued stochastic process $(X_i)_{i \in S}$, where $S$ is finite or countably infinite, can equivalently be viewed as the random subset $\{i \in S : X_i = 1\}$ of $S$. Denoting this random subset by $\X$, we have $\X \cap A = (X_i)_{i \in A}$ and $|\X \cap A| = \sum_{i \in A} X_i$ for every $A \subseteq S$. From this point forth, we will alternatively use these representations without specifying which one is being used.

The idea of the proof of \cref{Th:TailTriviality} is as follows: A simple consequence of summable covariances is the asymptotic decorrelation of the number of ones in distant parts of the stochastic process (\cref{Lem:DecorrelationOfSize}). It turns out that, under negative association, this implies the asymptotic decorrelation of distant parts of the stochastic process (\cref{Prop:CovBound}). This property, in turn, is equivalent to tail triviality. The following proposition formalizes the last statement.

\begin{prop}[Proposition 7.9 of \cite{Georgii2011Gibbs}]\label{Prop:Georgii}
	A stochastic process $(X_i)_{i \in \N}$ is tail trivial if and only if for every cylinder event $E \in \sigma(X_i : i \in \N)$ we have
	\begin{align*}
		\lim_{N \to \infty} \sup_{F \in \sigma(X_i \, : \, i \geq N)} \big|\cov(1_E,1_F)\big| = 0.
	\end{align*}
\end{prop}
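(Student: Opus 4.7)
My plan is to treat the two implications separately, using the backward martingale convergence theorem for one direction and a density argument for the other. Throughout, write $\mathcal{G}_N = \sigma(X_i : i \geq N)$, so that the $\mathcal{G}_N$ are decreasing and $\bigcap_{N \geq 1} \mathcal{G}_N = \Tcal(\X)$.

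For the forward direction, assume $\Tcal(\X)$ is trivial and fix a cylinder event $E$. The backward martingale convergence theorem applied to the decreasing filtration $(\mathcal{G}_N)$ gives $\E[1_E \mid \mathcal{G}_N] \to \E[1_E \mid \Tcal(\X)] = \prb(E)$ in $L^1$ (the last equality by triviality of $\Tcal(\X)$). For any $F \in \mathcal{G}_N$, conditioning yields
\begin{align*}
	\cov(1_E, 1_F) = \E\big[1_F\big(\E[1_E \mid \mathcal{G}_N] - \prb(E)\big)\big],
\end{align*}
and since $0 \leq 1_F \leq 1$, this quantity is bounded in absolute value by $\big\|\E[1_E \mid \mathcal{G}_N] - \prb(E)\big\|_{L^1}$. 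The bound is uniform in $F \in \mathcal{G}_N$, so the supremum tends to zero as $N \to \infty$.

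For the converse, assume the covariance condition and let $A \in \Tcal(\X)$. Since $\Tcal(\X) \subseteq \sigma(X_i : i \in \N)$ and cylinder events generate this $\sigma$-field and form an algebra, there exist cylinder events $E_n$, each depending only on finitely many coordinates (say $X_1, \ldots, X_{k_n}$), with $\prb(A \triangle E_n) \to 0$; in particular $1_{E_n} \to 1_A$ in $L^2$, so $\cov(1_{E_n}, 1_A) \to \cov(1_A, 1_A) = \var(1_A)$. On the other hand, because $A \in \Tcal(\X) \subseteq \mathcal{G}_N$ for every $N$, fixing $n$ and taking $N > k_n$ gives
\begin{align*}
	\big|\cov(1_{E_n}, 1_A)\big| \leq \sup_{F \in \mathcal{G}_N} \big|\cov(1_{E_n}, 1_F)\big|,
\end{align*}
and the right-hand side tends to $0$ as $N \to \infty$ by hypothesis. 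Hence $\cov(1_{E_n}, 1_A) = 0$ for every $n$, so $\var(1_A) = 0$ and $\prb(A) \in \{0, 1\}$.

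There is no real obstacle here; the argument is a standard measure-theoretic dichotomy between "tail $\sigma$-field viewed as an intersection" and "cylinders viewed as a generating algebra." The only point that needs a moment of care is the uniformity in $F$ in the forward direction, which is why the single $L^1$-bound on the conditional expectation is the right tool rather than a per-$F$ argument; and in the converse one must remember that $A$, being a tail event, sits inside every $\mathcal{G}_N$, so it is legitimate to plug $F = A$ into the supremum assumption.
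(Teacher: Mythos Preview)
The paper does not supply its own proof of this proposition; it is simply quoted from Georgii's book as \cref{Prop:Georgii} and used as a black box. So there is nothing in the paper to compare against.

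Your argument is correct and is essentially the standard one. A minor remark on the converse: the restriction $N > k_n$ is unnecessary, since $A \in \Tcal(\X) \subseteq \mathcal{G}_N$ for \emph{every} $N$, so the inequality $\big|\cov(1_{E_n},1_A)\big| \leq \sup_{F \in \mathcal{G}_N}\big|\cov(1_{E_n},1_F)\big|$ already holds for all $N$; you then let $N \to \infty$ and use the hypothesis (which applies because $E_n$ is a cylinder event) to conclude $\cov(1_{E_n},1_A)=0$. This is exactly what you do, so the extraneous condition is harmless.
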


A more general version of the following result already appears in \cite{Bulinski1998Asymptotical}, but we include a proof for the sake of completeness.

\begin{prop}\label{Prop:CovBound}
	Let $X_1,\dots,X_n$ be NA Bernoulli random variables and $\X = (X_1,\dots,X_n)$. If $A,B \subseteq [n]$ are disjoint, then for any $E \in \sigma(\X \cap A)$ and $F \in \sigma(\X \cap B)$ we have
	\begin{align*}
		\big| \cov(1_E,1_F) \big| \leq \big| \cov(|\X \cap A|,|\X \cap B|) \big|.
	\end{align*}
\end{prop}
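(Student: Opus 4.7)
The plan is first to use NA to rewrite the right-hand side in a manageable form, and then to establish the resulting inequality via a stochastic-dominance argument obtained from NA together with Strassen's theorem.

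Since $\X$ is NA, $\cov(X_i, X_j) \leq 0$ for all $i \neq j$, and hence
\[
\cov(|\X \cap A|, |\X \cap B|) = \sum_{i \in A,\, j \in B} \cov(X_i, X_j) = -\sum_{i \in A,\, j \in B} |\cov(X_i, X_j)|.
\]
Consequently $|\cov(|\X \cap A|, |\X \cap B|)| = \sum_{i \in A,\, j \in B} |\cov(X_i, X_j)|$, so the inequality reduces to showing
\[
|\cov(1_E, 1_F)| \;\leq\; \sum_{i \in A,\, j \in B} |\cov(X_i, X_j)|. \qquad (\ast)
\]

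I would first establish $(\ast)$ in the case $|B| = 1$, say $B = \{j\}$ and (without loss of generality) $1_F = X_j$. The key point is that NA gives a stochastic-dominance relation between the two conditional laws: for every increasing $f \colon \{0,1\}^A \to \R$, NA applied to the disjoint blocks $A$ and $\{j\}$ yields $\cov(f(X_A), X_j) \leq 0$, which rearranges to $\E[f \mid X_j = 1] \leq \E[f \mid X_j = 0]$. Hence $\mathcal L(X_A \mid X_j = 1) \leq_{st} \mathcal L(X_A \mid X_j = 0)$, and by Strassen's theorem there is a coupling $(Y, Z)$ with $Y \leq Z$ componentwise almost surely, $Y \sim \mathcal L(X_A \mid X_j = 1)$ and $Z \sim \mathcal L(X_A \mid X_j = 0)$. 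Using the identity $\cov(1_E, X_j) = p_j(1-p_j)\bigl[\prb(E \mid X_j = 1) - \prb(E \mid X_j = 0)\bigr]$ together with the union bound, the componentwise inequality $Y \leq Z$ gives $\prb(Y_i \neq Z_i) = \prb(X_i = 1 \mid X_j = 0) - \prb(X_i = 1 \mid X_j = 1)$, so
\[
|\cov(1_E, X_j)| \leq p_j(1-p_j)\,\prb(Y \neq Z) \leq p_j(1-p_j) \sum_{i \in A} \prb(Y_i \neq Z_i) = \sum_{i \in A} |\cov(X_i, X_j)|.
\]

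For general $|B|$, the natural strategy is a Doob martingale decomposition of $1_F - \E 1_F$ along an enumeration $j_1, \ldots, j_k$ of $B$: writing $1_F - \E 1_F = \sum_r \xi_r (X_{j_r} - \E[X_{j_r} \mid \mathcal F_{r-1}])$ with $\mathcal F_r := \sigma(X_{j_1}, \ldots, X_{j_r})$ and $|\xi_r| \leq 1$, one obtains $\cov(1_E, 1_F) = \sum_r \E[\xi_r \cov(1_E, X_{j_r} \mid \mathcal F_{r-1})]$, reducing the problem to bounding each conditional covariance. The main obstacle is that NA is not in general preserved under conditioning, so the base-case argument of the previous paragraph does not apply directly to $\cov(1_E, X_{j_r} \mid \mathcal F_{r-1})$. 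I would resolve this by replacing the conditional coupling with a global coupling built from the unconditional NA of $\X$: for each $r$, NA of the full vector still produces the requisite stochastic dominance of the conditional laws of $X_A$ given $(\mathcal F_{r-1}, X_{j_r} = 1)$ versus $(\mathcal F_{r-1}, X_{j_r} = 0)$, which can be assembled into a single coupling that yields the bound for every $r$ simultaneously; summing then gives $(\ast)$.
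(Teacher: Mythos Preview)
Your base case $|B|=1$ is correct and the Strassen coupling is a clean way to get
\[
|\cov(1_E,X_j)| \;\le\; \sum_{i\in A}|\cov(X_i,X_j)|.
\]

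The gap is in the passage to general $|B|$. You correctly identify the obstacle---NA is not preserved under conditioning---but the resolution you sketch does not work. The assertion that ``NA of the full vector still produces the requisite stochastic dominance of the conditional laws of $X_A$ given $(\mathcal F_{r-1},X_{j_r}=1)$ versus $(\mathcal F_{r-1},X_{j_r}=0)$'' is precisely the conditional statement that unconditional NA does \emph{not} supply: for fixed values $b_1,\dots,b_{r-1}$ of $X_{j_1},\dots,X_{j_{r-1}}$, there is no reason for
\[
\E\bigl[f(X_A)\mid X_{j_1}=b_1,\dots,X_{j_{r-1}}=b_{r-1},\,X_{j_r}=1\bigr]
\;\le\;
\E\bigl[f(X_A)\mid X_{j_1}=b_1,\dots,X_{j_{r-1}}=b_{r-1},\,X_{j_r}=0\bigr]
\]
to hold for all increasing $f$; this is the strictly stronger property sometimes called conditional negative association. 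Without it, the conditional covariances $\cov(1_E,X_{j_r}\mid\mathcal F_{r-1})$ in your martingale decomposition cannot be bounded by the Strassen argument, and no ``global coupling'' built only from unconditional NA will give you the coordinatewise inequality you need inside each conditional slice. So the inductive step, as written, is a genuine hole.

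The paper's proof bypasses all of this with a single observation specific to Bernoulli coordinates: since $1_E$ and $1_F$ take values in $\{0,1\}$, the functions
\[
|\X\cap A|\pm 1_E\qquad\text{and}\qquad |\X\cap B|\pm 1_F
\]
are coordinatewise increasing (raising one $X_i$ from $0$ to $1$ changes $|\X\cap A|$ by $+1$ and $1_E$ by at most $1$ in absolute value). Applying NA to the pair $(|\X\cap A|+1_E,\,|\X\cap B|+1_F)$ and to $(|\X\cap A|-1_E,\,|\X\cap B|-1_F)$ and adding gives $\cov(1_E,1_F)\le -\cov(|\X\cap A|,|\X\cap B|)$; swapping the sign on $1_F$ gives the reverse inequality. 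No conditioning, no induction on $|B|$, no coupling needed.
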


\begin{proof}
	Note that $|\X \cap A| \pm 1_E$ and $|\X \cap B| \pm 1_F$ are increasing functions of $\X$ with disjoint supports. Therefore, by negative association,
	\begin{align*}
		\cov\big(|\X \cap A| + 1_E, |\X \cap B| + 1_F\big) \leq 0
		\qquad\;
		\text{and}
		\qquad\;
		\cov\big(|\X \cap A| - 1_E, |\X \cap B| - 1_F\big) \leq 0.
	\end{align*}
	Adding these inequalities gives
	\begin{align*}
		\cov(1_E, 1_F) \leq - \cov\big(|\X \cap A|, |\X \cap B|\big).
	\end{align*}
	By replacing $1_F$ with $-1_F$ in the preceding argument, we arrive at
	\begin{align*}
		\cov(1_E, 1_F) \geq \cov\big(|\X \cap A|, |\X \cap B|\big).
	\end{align*}
	This completes the proof.
\end{proof}

\begin{lem}\label{Lem:DecorrelationOfSize}
	Let $\X = (X_i)_{i \in \N}$ be a sequence of Bernoulli random variables. If $\X$ has summable coviariances, then for every finite $A \subseteq \N$ and $\varepsilon \in \R_{>0}$, there exists $N \in \N$ such that
	\begin{align*}
		\big| \cov \big( |\X \cap A| , \big|\X \cap [N,\infty)\big| \big) \big| < \varepsilon.
	\end{align*}
\end{lem}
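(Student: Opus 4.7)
The plan is to expand the covariance by bilinearity into pairwise covariances $\cov(X_i,X_j)$, use negative association to control signs so that no cancellations can help or hurt the bound, and then appeal to summable covariances together with the finiteness of $A$. Fix a finite $A \subseteq \N$ and $\varepsilon > 0$. Since $A$ is finite, for every $N > \max A$ the sets $A$ and $[N,\infty)$ are disjoint. For any finite $M \geq N$, bilinearity of covariance applies (each factor is a finite sum of bounded random variables), giving
\[
\cov\bigl(|\X \cap A|,\; |\X \cap [N,M]|\bigr) = \sum_{i \in A} \sum_{j=N}^{M} \cov(X_i, X_j).
\]

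Negative association forces $\cov(X_i,X_j) \leq 0$ for every distinct pair appearing on the right, so the absolute value of the left-hand side equals the same sum with absolute values inserted, and is therefore dominated by
\[
\sum_{i \in A} \sum_{j=N}^{\infty} \bigl|\cov(X_i, X_j)\bigr|.
\]
By the summable covariances hypothesis, for each fixed $i$ the tail $\sum_{j \geq N} |\cov(X_i,X_j)|$ tends to $0$ as $N \to \infty$; since $A$ is finite, I can choose $N$ large enough that each of the $|A|$ inner tails is less than $\varepsilon/|A|$. This yields the bound $\varepsilon$ uniformly in $M \geq N$.

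The only point requiring care, and the main obstacle if one wants to read the statement literally, is that $|\X \cap [N,\infty)| = \sum_{j \geq N} X_j$ need not be integrable: for instance, i.i.d.\ Bernoulli$(1/2)$ variables have summable covariances yet an almost surely infinite tail sum. The natural way to handle this is to define $\cov\bigl(|\X \cap A|, |\X \cap [N,\infty)|\bigr)$ as the limit of the finite-$M$ covariances considered above, which exists and is finite by the absolute convergence of the double series $\sum_{i \in A}\sum_{j \geq N}|\cov(X_i,X_j)|$ under summable covariances and NA. Equivalently, one can split $|\X \cap A| - \E|\X \cap A|$ into its positive and negative parts and apply monotone convergence to each, arriving at the same value. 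Under either reading, the uniform bound from the previous paragraph transfers directly to the tail and completes the proof.
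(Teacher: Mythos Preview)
Your approach is essentially the paper's: expand by bilinearity, bound by the tail sums $\sum_{j\geq N}|\cov(X_i,X_j)|$, and use summable covariances plus finiteness of $A$ to make these small. One correction, though: you invoke negative association to force $\cov(X_i,X_j)\leq 0$, but NA is \emph{not} among the hypotheses of this lemma---only summable covariances is assumed. The NA step is also unnecessary: the triangle inequality $\bigl|\sum_{i,j}\cov(X_i,X_j)\bigr|\leq\sum_{i,j}|\cov(X_i,X_j)|$ already gives the bound you need, and this is exactly what the paper does. Your discussion of the integrability of $|\X\cap[N,\infty)|$ and the interpretation via limits in $M$ is more careful than the paper, which writes the inequality directly; this is a fair point, and your resolution is fine (and indeed the proof of \cref{Th:TailTriviality} only ever uses the finite-$M$ version anyway).
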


\begin{proof}
	Since $\X$ has summable covariances, for every $i \in A$ and $\varepsilon \in \R_{>0}$ there exists $N_i \in \N$ such that
	\begin{align*}
		\sum_{j=N_i}^\infty \big|\cov(X_i,X_j)\big| < \dfrac{\varepsilon}{|A|}.
	\end{align*}
	If $N = \max\{N_i : i \in A\}$ then
	\begin{align*}
		\big| \cov \big( |\X \cap A| , |\X \cap \{i : i \geq N\}| \big) \big|
		\leq \sum_{i \in A} \sum_{j=N}^\infty \big|\cov(X_i,X_j)\big|
		< \varepsilon.
	\end{align*}
\end{proof}

\begin{proof}[Proof of \cref{Th:TailTriviality}]
	Suppose that $E \in \sigma(\X)$ is a cylinder event. Hence there is a finite $A \subseteq \N$ such that $E \in \sigma(\X \cap A)$. Let $\varepsilon \in \R_{>0}$ and $N$ be given by \cref{Lem:DecorrelationOfSize}. Fix $F \in \sigma(X_i : i \geq N)$ and choose a large enough $M \in \N$ such that $M > N$ and there exists $\wt{F} \in \sigma\big(\X \cap [N,M]\big)$ that satisfies $\prb(F \triangle \wt{F}) < \varepsilon$. Note that $\big| \cov\big(|\X \cap A| , \big|\X \cap [N,M]\big|\big) \big| < \varepsilon$ and thus, by \cref{Prop:CovBound}, $\big|\cov(1_E,1_{\wt{F}})\big| < \varepsilon$. Therefore,
	\begin{align*}
		\big| \cov(1_E,1_F) \big| \leq \big| \cov(1_E,1_{\wt{F}}) \big| + \big| \cov(1_E,1_F-1_{\wt{F}}) \big| < \varepsilon + \big| \cov(1_E,1_F-1_{\wt{F}}) \big|.
	\end{align*}
	On the other hand,
	\begin{align*}
		\big| \cov(1_E,1_F-1_{\wt{F}}) \big| \leq 2 \, \E\big[ |1_F-1_{\wt{F}}| \big] \leq 2 \, \E\big[ 1_{F \triangle \wt{F}} \big] < 2\varepsilon.
	\end{align*}
	Hence $\big|\cov(1_E,1_F)\big| \leq 3\varepsilon$ and the result follows by \cref{Prop:Georgii}.
\end{proof}

\subsection{Strongly Rayleigh processes}\label{Subsec:SR}

Let $n$ be a positive integer. A probability measure $\mu$ on $\{0,1\}^n$ is \textit{strongly Rayleigh} if its generating polynomial, defined by
	\begin{align*}
		f_\mu(z_1,\dots,z_n) = \sum_{I \in \{0,1\}^n} \mu(I) \, \prod_{i=1}^{n} z_i^{I_i},
	\end{align*}
is ``real stable''---that is, it has no roots in $\mathbb{H}^n$, where $\mathbb{H}$ denotes the open upper half-plane. A vector $(X_1,\dots,X_n)$ of Bernoulli random variables is \textit{strongly Rayleigh} if its law is strongly Rayleigh. Furthermore, a sequence of Bernoulli variables $(X_i)_{i \in \N}$ is \textit{strongly Rayleigh} if $(X_1,\dots,X_n)$ is strongly Rayleigh for every finite $n \in \N$. See \cite{Borcea2009Negative} for more information.
	
	We will use the ``stochastic covering property'' of strongly Rayleigh measures to prove that strongly Rayleigh processes have summable covariances. For random subsets $\Y$ and $\Zcal$ of $[n]$, we say that $\Y$ \textit{stochastically covers} $\Zcal$ if there is a coupling $(\Y,\Zcal)$ that is supported on the set of pairs $(A,B)$ for which $A=B$ or $A \supseteq B$ with $|A \setminus B| = 1$. The stochastic covering property for strongly Rayleigh vectors is given by the following proposition.

\begin{prop}[Proposition 2.2 of \cite{Pemantle2014Concentration}]\label{Prop:SCP}
	Let $\X = (X_1,\dots,X_n)$ be strongly Rayleigh and $B \subseteq [n]$. If $U \subseteq V \subseteq B$ and $|V \setminus U| = 1$, then $[\X \, | \, \X \cap B = U]$ stochastically covers $[\X \, | \, \X \cap B = V]$.
\end{prop}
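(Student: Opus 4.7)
The plan is to reduce the statement to a conditioning that differs at a single coordinate, extract stochastic domination from negative association, and then sharpen that coupling to the one-element gap required by stochastic covering using the full strength of real stability.

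Write $V = U \cup \{j\}$ for the unique $j \in V \setminus U$ and set $T = [n] \setminus B$. Since both conditional laws impose $\X \cap (B \setminus \{j\}) = U$, I would condition on this common event first. Strongly Rayleigh laws are closed under fixing individual coordinates --- real stability of the generating polynomial is preserved under the substitutions $z_i \mapsto 0$ and $\partial_{z_i}|_{z_i = 0}$, which realize conditioning on $X_i = 0$ and $X_i = 1$ respectively --- so the conditional law $\nu$ of $(\X \cap T, X_j)$ on $\{0,1\}^{T \cup \{j\}}$ is again strongly Rayleigh. The task then becomes: for such a $\nu$, construct a coupling of $[\nu \mid X_j = 0]$ and $[\nu \mid X_j = 1]$, viewed as laws on $\{0,1\}^T$, with the stochastic covering property.

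Stochastic domination in this base case is immediate from negative association (which strongly Rayleigh implies): for every increasing $f : \{0,1\}^T \to \R$, $\cov_\nu(X_j, f) \leq 0$ rearranges to $\E_\nu[f \mid X_j = 0] \geq \E_\nu[f \mid X_j = 1]$, and Strassen's theorem supplies a monotone coupling $(\Y_U, \Y_V)$ with $\Y_V \subseteq \Y_U$. To refine this into a covering coupling with $|\Y_U \setminus \Y_V| \leq 1$ almost surely, I would use the exchange structure enjoyed by strongly Rayleigh measures. Concretely, one constructs a reversible swap dynamics on $\{0,1\}^{T \cup \{j\}}$ that toggles $X_j$ by exchanging its value with that of some $X_k$ for $k \in T$, with rates calibrated so that $\nu$ is stationary; projecting onto $T$ then yields a coupling in which $\Y_U$ and $\Y_V$ differ by at most the single exchanged coordinate.

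The main obstacle is verifying that such swap rates can be chosen non-negative with the correct stationary behavior and that the projection recovers exactly $[\nu \mid X_j = 0]$ and $[\nu \mid X_j = 1]$. This is where strongly Rayleigh is used in a strictly stronger way than negative association: the required pairwise inequalities on ratios of $\nu$-masses are Hurwitz-type consequences of the real-rootedness of the univariate restrictions of the generating polynomial of $\nu$, and they can fail for merely negatively associated measures --- as the Gaussian threshold examples constructed later in the paper demonstrate.
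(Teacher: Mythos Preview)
The paper does not prove this proposition; it is quoted from \cite{Pemantle2014Concentration} and used as a black box in the proof of \cref{Lem:SRImpliesSummableCovs}. There is therefore no in-paper argument to compare your proposal against.

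On the proposal itself: your reduction to a single distinguished coordinate $j$ via closure of the strongly Rayleigh class under conditioning is correct, and obtaining the stochastic domination $[\nu \mid X_j = 0] \succeq [\nu \mid X_j = 1]$ from negative association together with Strassen's theorem is routine. The gap is the passage from domination to \emph{covering}, which is precisely the content of the proposition. You posit a swap dynamics exchanging $X_j$ with some $X_k$, but you neither construct the rates, nor verify stationarity, nor show that the projections recover the two conditional laws; you yourself label this ``the main obstacle'' and then dispatch it by an appeal to unspecified ``Hurwitz-type consequences of real-rootedness''. That is a placeholder, not an argument. Note moreover that a genuine swap of $X_j$ with $X_k$ preserves the total count $|X|$, so a coupling built solely from swap moves always yields $|\Y_U \setminus \Y_V| = 1$ and can never produce the outcome $\Y_U = \Y_V$, which must carry positive mass whenever $|\X \cap T|$ is non-deterministic under $\nu$; any correct construction has to accommodate both alternatives in the covering relation, and you have not indicated how. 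As written, the proposal is an outline whose decisive step is left open.
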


The following result is proved in \cite[Lemma 6.5]{Ghosh2017Multivariate}, but we recall its proof for the sake of completeness.

\begin{lem}\label{Lem:SRImpliesSummableCovs}
	If $(X_1,\dots,X_n)$ is strongly Rayleigh then
	\begin{align*}
		\forall i \in [n] \, : \, \sum_{j=1}^n \cov(X_i,X_j) \geq 0.
	\end{align*}
\end{lem}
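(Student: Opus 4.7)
My plan is to rewrite the sum as a single covariance and then extract its sign from a suitable coupling provided by \cref{Prop:SCP}.

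First, by linearity of covariance,
\begin{align*}
\sum_{j=1}^n \cov(X_i, X_j) = \cov\Big(X_i, \sum_{j=1}^n X_j\Big) = \cov(X_i, |\X|),
\end{align*}
and a routine conditioning manipulation gives
\begin{align*}
\cov(X_i, |\X|) = \prb(X_i=1)\,\prb(X_i=0)\,\big( \E[|\X| \mid X_i=1] - \E[|\X| \mid X_i=0] \big).
\end{align*}
So the lemma reduces to showing $\E[|\X| \mid X_i=1] \geq \E[|\X| \mid X_i=0]$. Intuitively, conditioning on $X_i=1$ puts one more element into $\X$; the worry is that negative dependence could push the remaining coordinates down by more than one in expectation, and stochastic covering is exactly what rules this out.

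To make this precise, I would apply \cref{Prop:SCP} with $B = \{i\}$, $U = \emptyset$, and $V = \{i\}$. This yields a coupling $(\Y, \Zcal)$ with $\Y \sim [\X \mid X_i = 0]$ and $\Zcal \sim [\X \mid X_i = 1]$ in which, outside the coordinate $i$, either $\Y$ and $\Zcal$ coincide or else $\Y$ has exactly one additional $1$ that $\Zcal$ lacks. Combined with $i \notin \Y$ and $i \in \Zcal$ almost surely, a direct bookkeeping gives $|\Zcal| - |\Y| \in \{0,1\}$ on every realization, so $\E[|\Zcal|] \geq \E[|\Y|]$, as required.

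The main subtlety is the careful accounting of the coordinate $i$ against the one-element slack in the stochastic covering: the set inclusion favors $\Y$ over $\Zcal$ by at most one element outside $\{i\}$, while $\Zcal$ gains exactly one element at $i$ itself, so the two effects balance to give a non-negative total.
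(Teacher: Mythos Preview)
Your proposal is correct and follows essentially the same route as the paper: both arguments apply \cref{Prop:SCP} with $B=\{i\}$, $U=\emptyset$, $V=\{i\}$ to deduce that $\E[|\X|\mid X_i=1]\geq \E[|\X|\mid X_i=0]$ (the paper phrases this as $\E[S_i\mid X_i=0]\leq \E[S_i\mid X_i=1]+1$ with $S_i=\sum_{j\neq i}X_j$), and then conclude $\cov(X_i,|\X|)\geq 0$. The only cosmetic omission is the degenerate case $\prb(X_i=0)\,\prb(X_i=1)=0$, where your conditional expectations are undefined but the covariance sum is trivially zero.
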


\begin{proof}
	Let $i \in [n]$ and $S_i = \sum_{j \in [n] \setminus \{i\}} X_j$. We can assume $\prb(X_i=0) \, \prb(X_i=1) \neq 0$---otherwise the result is obvious. By \cref{Prop:SCP}, $\E[S_i \, | \, X_i=0] \leq \E[S_i \, | \, X_i=1] + 1$. It follows that $\E[X_i+S_i \, | \, X_i=0] \leq \E[X_i+S_i \, | \, X_i=1]$. Thus $\E[X_i+S_i \, | \, X_i]$ is an increasing function of $X_i$ and so $X_i+S_i$ and $X_i$ are positively correlated. This immediately implies the lemma.
\end{proof}

The preceding lemma easily extends to strongly Rayleigh processes---that is, if $(X_i)_{i \in \N}$ is strongly Rayleigh then $\sum_{j=1}^\infty \cov(X_i,X_j) \geq 0$ for all $i \in \N$. By pairwise negative correlations, it follows that
\begin{align}\label{Eq:SRCovSum}
	\forall i \in \N \, : \, \sum_{j=1}^\infty \big|\cov(X_i,X_j)\big| \leq 2\,\var(X_i) \leq \dfrac{1}{2}.
\end{align}
Thus strongly Rayleigh processes satisfy the summable covariances property.

\begin{cor}\label{Cor:SRTailTriviality}
	Strongly Rayleigh processes are tail trivial.
\end{cor}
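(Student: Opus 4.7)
The plan is to deduce the corollary immediately from \cref{Th:TailTriviality}, once we verify that strongly Rayleigh processes satisfy both hypotheses of that theorem: they are negatively associated, and they have summable covariances. Since the hard analytic work is already packaged into \cref{Th:TailTriviality} and \cref{Lem:SRImpliesSummableCovs}, the proof should be essentially a two-line assembly.

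First I would recall that strongly Rayleigh measures are negatively associated. This is one of the foundational properties established by Borcea, Br\"and\'en and Liggett in \cite{Borcea2009Negative}, so I would simply cite it, noting that NA for a strongly Rayleigh sequence $(X_i)_{i \in \N}$ follows from NA of each finite restriction $(X_1,\dots,X_n)$, which is exactly what the citation provides. Second, I would invoke the bound \eqref{Eq:SRCovSum} derived just above, which gives
\begin{align*}
    \sum_{j=1}^\infty \big|\cov(X_i,X_j)\big| \leq \tfrac{1}{2} \qquad \text{for every } i \in \N,
\end{align*}
and in particular implies the summable covariances property.

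With both hypotheses in hand, \cref{Th:TailTriviality} applies directly and yields tail triviality of $\X$. There is no real obstacle here; the only care needed is to make sure the NA property is stated for the infinite sequence and not just for finite truncations, which follows because the definition of NA involves only a pair of increasing functions with disjoint finite supports, so finite-dimensional NA of every $(X_1,\dots,X_n)$ suffices.
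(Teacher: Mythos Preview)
Your proposal is correct and matches the paper's approach exactly: the corollary is stated without an explicit proof because the preceding text already establishes the summable covariances bound \eqref{Eq:SRCovSum}, and negative association of strongly Rayleigh measures is cited from \cite{Borcea2009Negative}, so \cref{Th:TailTriviality} applies immediately.
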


\begin{rmrk}
	We only relied on the stochastic covering property of strongly Rayleigh measures in proof of \cref{Lem:SRImpliesSummableCovs}. Therefore, tail triviality holds for the slightly larger class of NA $\{0,1\}$-processes that satisfy the stochastic covering property.
\end{rmrk}

\subsection{Negatively associated Stationary processes}\label{Subsec:Stationary}

A stochastic process $(X_i)_{i \in S}$, where $S = \N$ or $\Z$, is \textit{weakly stationary} if for every $k \in S$ we have
\begin{align*}
	\forall i \in S \, : \, \E[X_i] &= \E[X_{i+k}],
	\\
	\forall i,j \in S \, : \, \cov(X_i,X_j) &= \cov(X_{i+k},X_{j+k}).
\end{align*}
Also, $(X_i)_{i \in S}$ is \textit{strongly stationary} if for every $k \in S$ and $n \in \N$ we have
\begin{align*}
	\forall i_1,\dots,i_n \in S \, : \, (X_{i_1+k},\dots,X_{i_n+k}) \sim (X_{i_1},\dots,X_{i_n}).
\end{align*}

We will use the following lemma to prove the summable covariances property for NA stationary processes.

\begin{lem}\label{Lem:PairwiseCovSumOfPNCMeasures}
	If $X_1,\dots,X_n$ are negatively correlated Bernoulli random variables, then
	\begin{align*}
		\sum_{i,j=1}^n \big|\cov(X_i,X_j)\big| \leq \dfrac{1}{2}n.
	\end{align*}
\end{lem}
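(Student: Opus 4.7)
The plan is to split the double sum on the diagonal and use the fact that negative correlation converts absolute values into signed covariances, which then rearrange into a variance identity.

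First I would write
\begin{align*}
	\sum_{i,j=1}^n \big|\cov(X_i,X_j)\big| = \sum_{i=1}^n \var(X_i) + \sum_{\substack{i,j=1 \\ i \neq j}}^n \big|\cov(X_i,X_j)\big|.
\end{align*}
Since the $X_i$ are pairwise negatively correlated, $|\cov(X_i,X_j)| = -\cov(X_i,X_j)$ for $i \neq j$, and therefore the off-diagonal sum equals $\sum_{i=1}^n \var(X_i) - \var\!\left(\sum_{i=1}^n X_i\right)$ by the standard variance-of-a-sum identity. Substituting back gives
\begin{align*}
	\sum_{i,j=1}^n \big|\cov(X_i,X_j)\big| = 2 \sum_{i=1}^n \var(X_i) - \var\!\left(\sum_{i=1}^n X_i\right) \leq 2 \sum_{i=1}^n \var(X_i),
\end{align*}
where the inequality uses nonnegativity of variance.

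Finally, since each $X_i$ is Bernoulli, $\var(X_i) = \prb(X_i=1)\,\prb(X_i=0) \leq 1/4$, which gives the desired bound $n/2$. There is no serious obstacle here; the only minor subtlety is remembering to apply the sign flip only off the diagonal, so that the resulting expression telescopes into a variance which can then be dropped as a nonnegative term.
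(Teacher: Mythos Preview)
Your proof is correct and is essentially the same argument as the paper's: both expand the variance of $\sum_i X_i$, use nonnegativity of variance to bound the off-diagonal covariance sum by $\sum_i \var(X_i)$, and finish with $\var(X_i)\le 1/4$. The only cosmetic difference is that the paper starts from $0 \le \var\big(n^{-1/2}\sum_i X_i\big)$ and reads off the inequality directly, whereas you first rewrite the off-diagonal sum as $\sum_i \var(X_i)-\var(\sum_i X_i)$ before dropping the variance term.
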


\begin{proof}
	We have
	\begin{align*}
		0
		\leq \var \bigg( \dfrac{1}{\sqrt{n}} \sum_{i=1}^n X_i \bigg)
		= \dfrac{1}{n} \sum_{i=1}^n \var(X_i) + \dfrac{1}{n} \sum_{\substack{i,j=1 \\ i \neq j}}^n \cov(X_i,X_j).
	\end{align*}
	Therefore, by the pairwise negative correlations assumption, $\sum_{i \neq j} |\cov(X_i,X_j)| \leq \sum_i \var(X_i)$. The lemma follows because $\var(X_i) \leq 1/4$ for all $i \in [n]$.
\end{proof}

A corollary of this lemma is that if $(X_i)_{i \in S}$, where $S = \N$ or $\Z$, is weakly stationary and negatively correlated, then
\begin{align*}
	\forall i \in S \, : \, \sum_{j \in S} \big|\cov(X_i,X_j)\big| \leq \dfrac{1}{2}.
\end{align*}
To see this, note that
\begin{align*}
	\var(X_i) + 2\sum_{\substack{j=1 \\ j \neq i}}^\infty \big|\cov(X_i,X_j)\big| = \lim_{n \to \infty} \dfrac{1}{n} \sum_{j,k=1}^n \big|\cov(X_j,X_k)\big| \leq \dfrac{1}{2}.
\end{align*}
Thus negatively correlated weakly stationary processes satisfy the summable covariances property.

\begin{cor}\label{Cor:StationaryTailTriviality}
	NA weakly stationary $\{0,1\}$-valued processes are tail trivial.
\end{cor}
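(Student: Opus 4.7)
The proof will be an essentially immediate consequence of \cref{Th:TailTriviality} combined with the summable covariances bound established just above the corollary. My plan is as follows.

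Since negative association implies pairwise negative correlations, any NA weakly stationary $\{0,1\}$-valued process $\X = (X_i)_{i \in S}$ inherits the inequality $\sum_{j \in S} \big|\cov(X_i,X_j)\big| \leq 1/2$ for every $i \in S$. In particular, $\X$ satisfies the summable covariances property. When $S = \N$, \cref{Th:TailTriviality} applies directly to give triviality of $\Tcal(\X)$.

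When $S = \Z$, the natural tail $\sigma$-field is the two-sided $\bigcap_{N} \sigma(X_i : |i| \geq N)$, and a minor adaptation is needed. I would verify that the proof of \cref{Th:TailTriviality} transfers to the $\Z$-indexed setting without change: \cref{Prop:CovBound} is already stated for arbitrary disjoint finite index sets, and \cref{Lem:DecorrelationOfSize} extends to give $\big|\cov\big(|\X \cap A|, |\X \cap \{j : |j| \geq N\}|\big)\big| < \varepsilon$ for sufficiently large $N$, using that $\sum_{|j| \geq N} |\cov(X_i,X_j)| \to 0$ by the summability bound. The finite-window approximation step in the proof of \cref{Th:TailTriviality}, together with the analogue of \cref{Prop:Georgii} for two-sided tails, then transfers verbatim.

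The substantive content of the corollary is the summable covariances bound itself, which the authors have already carried out explicitly via \cref{Lem:PairwiseCovSumOfPNCMeasures} and weak stationarity. There is no real obstacle; the only care required is the bookkeeping for the $\Z$-indexed case.
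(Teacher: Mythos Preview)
Your proposal is correct and follows essentially the same approach as the paper: once the summable covariances bound $\sum_{j}\lvert\cov(X_i,X_j)\rvert\le 1/2$ has been derived from \cref{Lem:PairwiseCovSumOfPNCMeasures} and weak stationarity, the corollary is read off from \cref{Th:TailTriviality}. If anything, you are more careful than the paper, which states the corollary without explicitly addressing the two-sided tail in the $\Z$-indexed case; your remark that \cref{Prop:CovBound}, \cref{Lem:DecorrelationOfSize}, and the approximation argument all transfer to $\{j:|j|\ge N\}$ is the natural way to fill that small gap.
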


Since tail triviality implies ergodicity, this corollary is a generalization of a result due to Lebowitz \cite{Lebowitz1972Bounds} which establishes ergodicity for NA (strongly) stationary $\{0,1\}$-valued processes.

\begin{rmrk}
	With appropriate modifications to \cref{Prop:CovBound}, it is possible to extend \cref{Th:TailTriviality} to bounded NA processes. \cref{Lem:PairwiseCovSumOfPNCMeasures} also easily extends to bounded negatively correlated processes. Thus \cref{Cor:StationaryTailTriviality}, in fact, holds true for bounded NA weakly stationary processes.
\end{rmrk}

\section{Tail triviality of Gaussian and Gaussian threshold processes}\label{Sec:Gaussian}

In this section, we focus on Gaussian threshold processes. In \cref{Subsec:GaussianTailTriviality}, we prove that NA Gaussian processes are tail trivial. This implies tail triviality for NA Gaussian threshold processes. In \cref{Subsec:GaussianThreshold}, we construct an NA Gaussian threshold process that does not satisfy the summable covariances property. We also show that NA Gaussian threshold vectors are not in general strongly Rayleigh. This identifies a natural class of NA measures that are not strongly Rayleigh.

\subsection{Negatively associated Gaussian processes}\label{Subsec:GaussianTailTriviality}

It is well-known that for Gaussian processes, all notions of negative dependence are equivalent (see \cite{Joag-Dev1983Negative}). In particular, a Gaussian process is NA if and only if it is negatively correlated. In this subsection, we prove the following theorem.

\begin{thrm}\label{Th:GaussianTailTriviality}
	If $\Zcal = (Z_i)_{i \in \N}$ is a Gaussian process and $\cov(Z_i,Z_j) \leq 0$ for all distinct $i,j \in \N$, then $\Zcal$ has a trivial tail $\sigma$-field.
\end{thrm}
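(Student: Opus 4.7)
The plan is to verify the covariance criterion of \cref{Prop:Georgii}: for every cylinder event $E \in \sigma(Z_i : i \in A)$ with $A \subset \N$ finite and every $F \in \sigma(Z_i : i \geq N)$, show that $|\cov(1_E, 1_F)| \to 0$ as $N \to \infty$. I will use the classical fact that for jointly Gaussian vectors $(X,Y)$, the maximal correlation $\sup_{f,g} |\corr(f(X),g(Y))|$ equals the largest canonical correlation $\sup\{|\corr(U,V)| : U \in \spn(X),\, V \in \spn(Y)\}$. Taking $X = (Z_i)_{i \in A}$ and $Y = (Z_i)_{i \geq N}$, this furnishes $|\cov(1_E,1_F)| \leq \rho^*(X,Y)$, so the whole problem reduces to proving that the canonical correlation tends to zero.

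Set $\Hcal_N := \overline{\spn}^{L^2}(Z_k : k \geq N)$ and $\Hcal_\infty := \bigcap_N \Hcal_N$. The canonical correlation equals the operator norm of the orthogonal projection $P_{\Hcal_N}$ restricted to the finite-dimensional space $\spn(X)$. Since $\Hcal_N \downarrow \Hcal_\infty$, the pointwise strong convergence $P_{\Hcal_N}(U) \to P_{\Hcal_\infty}(U)$ on the finite-dimensional $\spn(X)$ upgrades to operator-norm convergence, and so the canonical correlation tends to $\|P_{\Hcal_\infty}|_{\spn(X)}\|_{\mathrm{op}}$. Tail triviality therefore reduces to the Hilbert-space statement that $\Hcal_\infty = \{0\}$.

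The heart of the argument is this last step, where I would exploit the negative-correlation hypothesis by a splitting trick. Suppose toward contradiction that $W \in \Hcal_\infty$ has $\var(W) = 1$; choose finite combinations $W^{(N)} = \sum_{k \geq N} c_k^{(N)} Z_k$ converging to $W$ in $L^2$ and split $W^{(N)} = A^{(N)} - B^{(N)}$ into positive- and negative-coefficient parts, $A^{(N)} = \sum_k c_k^{(N),+} Z_k$ and $B^{(N)} = \sum_k c_k^{(N),-} Z_k$. The supports of $c_k^{(N),+}$ and $c_k^{(N),-}$ are disjoint, so using $\cov(Z_k,Z_\ell) \leq 0$ for $k \neq \ell$ we get $\cov(A^{(N)}, B^{(N)}) \leq 0$, and the identity $\var(W^{(N)}) = \var(A^{(N)}) + \var(B^{(N)}) - 2\cov(A^{(N)}, B^{(N)})$ forces $\var(A^{(N)}), \var(B^{(N)}) \leq \var(W^{(N)})$. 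By weak compactness in $L^2$, along a subsequence $A^{(N)} \rightharpoonup A^\infty$ and $B^{(N)} \rightharpoonup B^\infty$; both limits lie in the weakly closed subspace $\Hcal_\infty$, and $W = A^\infty - B^\infty$.

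The closing computation then exploits the non-negativity of the split coefficients. For $N > j$, $\cov(A^{(N)}, Z_j) = \sum_k c_k^{(N),+} \cov(Z_k,Z_j)$ is a sum of non-positive terms, so after passage to the weak limit $\cov(A^\infty, Z_j) \leq 0$ for every $j$. This propagates one more step: $\cov(A^\infty, A^{(N)}) = \sum_k c_k^{(N),+} \cov(A^\infty, Z_k) \leq 0$ for every $N$, while weak convergence $A^{(N)} \rightharpoonup A^\infty$ gives $\cov(A^\infty, A^{(N)}) \to \var(A^\infty)$; hence $\var(A^\infty) \leq 0$ and $A^\infty = 0$. By symmetry $B^\infty = 0$, so $W = 0$, a contradiction. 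I expect the main technical point to be precisely this splitting maneuver: it is what converts the pointwise inequality $\cov(Z_k,Z_\ell) \leq 0$ into a uniform $L^2$ bound on the ``positive'' and ``negative'' parts of any approximating combination, which in turn is what allows the weak-compactness argument to produce limits inside $\Hcal_\infty$ on which the negativity can be iterated.
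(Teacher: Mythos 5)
Your argument is correct, and its overall architecture matches the paper's: reduce via \cref{Prop:Georgii} and the Gaussian maximal-correlation theorem (\cref{Th:Hotelling}) to the purely Hilbert-space statement that $\bigcap_N \ovl{\spn}\{Z_i : i \geq N\} = \{0\}$ when the $Z_i$ have pairwise nonpositive covariances, and then deduce decay of the canonical correlation from a finite block to the tail (your projection-convergence step is exactly the content of \cref{Cor:LinearAlgebraicDecorrelation}, obtained there by compactness of the unit ball of the finite-dimensional span rather than by quoting monotone convergence of projections). Where you genuinely diverge is in the proof of the key lemma (\cref{Prop:LinearAlgebraicTailTriviality}): the paper splits the approximants into positive and negative conic parts just as you do, but then runs an elementary two-scale comparison (comparing $u_1$, $u_m$ with $m$ beyond the support of $u_1$, and then $u_k$ beyond the support of $u_m$) to get explicit bounds $\|u_m^{\pm}\| < 4\varepsilon$ and hence $\|u\| < 9\varepsilon$, an argument valid in any inner product space with no completeness needed. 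You instead use the variance identity to get the uniform bound $\var(A^{(N)}), \var(B^{(N)}) \leq \var(W^{(N)})$, extract weak limits, and kill them by passing the sign condition $\cov(A^\infty, Z_j) \leq 0$ through the weak limit twice; this is softer and arguably cleaner, at the cost of invoking weak compactness (harmless here, since the Gaussian Hilbert space is complete), and note that the membership $A^\infty, B^\infty \in \Hcal_\infty$ you mention is not even needed for the contradiction. One small point to tighten: \cref{Th:Hotelling} is stated for finite jointly Gaussian vectors, while you apply it with $Y = (Z_i)_{i \geq N}$ infinite; this is fine, but it needs the one-line approximation of $F \in \sigma(Z_i : i \geq N)$ by events depending on finitely many coordinates (together with $|\cov(1_E,1_F)| \leq \tfrac{1}{4}\rho_{max}$), which is precisely the role of the event $\wt{F}$ in the paper's proof.
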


The idea of the proof is as follows: Using a result due to Kolmogorov (\cref{Th:Hotelling}), we show that the decorrelation of distant parts of a Gaussian process is equivalent to the decorrelation of linear functions on its distant parts. This reduces the question of tail triviality to a ``linear algebraic tail triviality'', which we prove in \cref{Prop:LinearAlgebraicTailTriviality} and \cref{Cor:LinearAlgebraicDecorrelation}.

\begin{thrm}\label{Th:Hotelling}
	Let $X_1,\dots,X_m,Y_1,\dots,Y_n$ be jointly Gaussian and define $\X = (X_1,\dots,X_m)$ and $\Y = (Y_1,\dots,Y_n)$. Then, the maximal correlation coefficient between $\X$ and $\Y$, defined by
	\begin{align*}
		\rho_{max}(\X,\Y) = \sup_{\substack{\varphi \in L^2(\X) \\ \psi \in L^2(\Y)}} \corr \big( \varphi(\X) , \psi(\Y) \big),
	\end{align*}
	is attained on linear functions.
\end{thrm}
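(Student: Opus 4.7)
The plan is to reduce the statement to a canonical diagonal form via linear transformations and then exploit the multivariate Hermite expansion of $L^2$ of a standard Gaussian vector. Since correlations are invariant under centering and invertible linear reparametrization, I would first apply Gram--Schmidt inside each of the spans $\spn(X_1,\ldots,X_m)$ and $\spn(Y_1,\ldots,Y_n)$, discarding any degenerate coordinates. This produces new jointly Gaussian vectors with identity covariance that generate the same $L^2$-spaces and whose linear functions coincide with those of the originals, so I may assume at the outset that $\cov(\X) = I_m$ and $\cov(\Y) = I_n$.

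Next, I would diagonalize the cross-covariance $C = \E[\X\Y^T]$ by singular value decomposition: choose orthogonal $U,V$ with $UCV^T = \mathrm{diag}(\rho_1,\ldots,\rho_k,0,\ldots,0)$, where $\rho_1\geq\cdots\geq\rho_k\geq 0$ and $k=\min(m,n)$. Replacing $(\X,\Y)$ by $(U\X,V\Y)$, I may assume all components are standard normal with $X_i,Y_j$ independent for $i\neq j$ and $\E[X_i Y_i]=\rho_i$. Cauchy--Schwarz forces $\rho_1\leq 1$, and the linear pair $\varphi(\X)=X_1$, $\psi(\Y)=Y_1$ attains correlation exactly $\rho_1$. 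Hence it remains only to prove the upper bound $\rho_{\max}(\X,\Y)\leq \rho_1$.

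For the upper bound, I would expand arbitrary $\varphi\in L^2(\X)$ and $\psi\in L^2(\Y)$ in the orthonormal basis of tensor products of normalized Hermite polynomials: $\varphi = \sum_{\alpha\in\N^m} c_\alpha H_\alpha(\X)$ and $\psi = \sum_{\beta\in\N^n} d_\beta \tilde H_\beta(\Y)$. Using the joint representation $Y_i = \rho_i X_i + \sqrt{1-\rho_i^2}\,W_i$ with independent auxiliary standard Gaussians $W_i$, every joint moment factors coordinatewise and reduces to the univariate Mehler identity $\E[H_p(X)H_q(\rho X+\sqrt{1-\rho^2}W)]=\delta_{pq}\,\rho^p$, giving
\[
	\E\bigl[H_\alpha(\X)\,\tilde H_\beta(\Y)\bigr] \;=\; \delta_{\alpha,\beta}\,\prod_{i=1}^k \rho_i^{\alpha_i},
\]
with the expectation vanishing unless $\alpha$ and $\beta$ are supported in $[k]$ and agree there. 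Since each $\rho_i \leq \rho_1 \leq 1$, any nonzero multi-index contributes a factor at most $\rho_1$. Subtracting the $\alpha=0$ term and applying Cauchy--Schwarz to $\sum_{\alpha\neq 0} c_\alpha d_\alpha \prod_i \rho_i^{\alpha_i}$ yields $|\cov(\varphi(\X),\psi(\Y))|\leq \rho_1\,\|\varphi-\E\varphi\|_2\,\|\psi-\E\psi\|_2$, i.e.\ $\rho_{\max}(\X,\Y) \leq \rho_1$.

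The main obstacle I expect is the bookkeeping for the Hermite moment identity when $m\neq n$ and some $\rho_i$ vanish, together with the need to justify the basis expansion rigorously. The auxiliary-noise representation circumvents the first issue by reducing the joint computation to one-dimensional Hermite orthogonality; for the second, standard density of polynomials in $L^2$ of a finite-dimensional Gaussian measure applies once the reduction to the canonical diagonal form is in place.
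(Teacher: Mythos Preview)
Your argument is correct and is precisely the classical Lancaster--Kolmogorov proof via reduction to canonical correlations and the tensor Hermite (Mehler) expansion; the paper does not give its own proof of this theorem but simply cites \cite{Lancaster1966Kolmogorov}, so your proposal supplies exactly the argument behind that reference.
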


see \cite{Lancaster1966Kolmogorov} for a proof of the preceding theorem. The next proposition states that a sequence of vectors with pairwise nonpositive inner products has a trivial ``tail linear span''.

\begin{prop}\label{Prop:LinearAlgebraicTailTriviality}
	Let $(v_i)_{i \in \N}$ be a sequence of vectors in an inner product space. If $\langle v_i, v_j \rangle \leq 0$ for all distinct $i, j \in \N$, then
	\begin{align*}
		\bigcap_{n=1}^\infty \, \ovl{\spn}\{v_i : i \geq n\} = \{0\},
	\end{align*}
	where $\ovl{\spn}\{v_i : i \geq n\}$ denotes the closed linear span of $\{v_i : i \geq n\}$.
\end{prop}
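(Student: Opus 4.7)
The plan is to argue by contradiction. Suppose $w \in \bigcap_{n=1}^\infty \ovl{\spn}\{v_i : i \geq n\}$ satisfies $\|w\| = 1$, and I derive $w = 0$. Closed spans can only grow when we pass to the Hilbert completion of the ambient inner product space, so proving the intersection trivial in the completion suffices, and I henceforth assume the ambient space $H$ is Hilbert. For each $n$, pick a finite linear combination $u_n = \sum_{j \in F_n} c_{n,j} v_j$ with $F_n \subseteq \{n, n+1, \dots\}$ such that $u_n \to w$ in norm.

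The key step is to split $u_n$ according to the sign of its coefficients:
\[
u_n^+ \;=\; \sum_{j \,:\, c_{n,j} > 0} c_{n,j}\, v_j,
\qquad
u_n^- \;=\; \sum_{j \,:\, c_{n,j} < 0} |c_{n,j}|\, v_j,
\]
so that $u_n = u_n^+ - u_n^-$ and each $u_n^\pm$ is a nonnegative combination of $\{v_j : j \geq n\}$. Because the supports of the two pieces are disjoint and $\langle v_j, v_l \rangle \leq 0$ for $j \neq l$, we get $\langle u_n^+, u_n^- \rangle \leq 0$; expanding $\|u_n\|^2 = \|u_n^+\|^2 + \|u_n^-\|^2 - 2\langle u_n^+, u_n^- \rangle$ then yields $\|u_n^\pm\|^2 \leq \|u_n\|^2 \to 1$, so the sequences $u_n^\pm$ are bounded in $H$.

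By weak sequential compactness I extract a common subsequence with $u_{n_k}^\pm \rightharpoonup u^\pm$; since $u_{n_k} \to w$ strongly, $u^+ - u^- = w$. Let $K$ be the set of all finite nonnegative linear combinations of $\{v_i : i \in \N\}$; its closure $\ovl{K}$ is convex, hence weakly closed by Mazur's theorem, and since each $u_n^\pm \in K$, the weak limits satisfy $u^\pm \in \ovl{K}$. Moreover, for any fixed $k$ and any $n_j > k$,
\[
\langle u_{n_j}^+, v_k \rangle \;=\; \sum_{l \,:\, c_{n_j, l} > 0} c_{n_j, l}\, \langle v_l, v_k \rangle \;\leq\; 0
\]
(nonnegative coefficients times nonpositive inner products), and letting $j \to \infty$ we get $\langle u^+, v_k \rangle \leq 0$; similarly, $\langle u^-, v_k \rangle \leq 0$ for every $k$.

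The loop closes here. Approximating $u^+ \in \ovl{K}$ in norm by a sequence of nonnegative combinations $\sum_j \alpha_j^{(l)} v_j$, continuity of the inner product gives
\[
\|u^+\|^2 \;=\; \lim_l \sum_j \alpha_j^{(l)} \langle u^+, v_j \rangle \;\leq\; 0,
\]
so $u^+ = 0$, and symmetrically $u^- = 0$; therefore $w = u^+ - u^- = 0$, contradicting $\|w\| = 1$. The main conceptual obstacle I foresee is recognizing that arbitrary approximations of $w$ carry no useful sign information about their coefficients, and that the positive/negative decomposition is precisely what converts the nonpositivity of the off-diagonal Gram entries into genuine norm bounds, while simultaneously placing the weak limits inside a closed convex cone on which Mazur's theorem forces them to vanish.
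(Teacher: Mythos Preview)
Your proof is correct. The positive/negative splitting $u_n = u_n^+ - u_n^-$ is the same key idea the paper uses, but from that point on the two arguments diverge.

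The paper avoids any compactness machinery: it picks three indices $1 < m < k$ with $m > i_1$ and $k > i_m$ (where $i_n$ is the largest index appearing in $u_n^\pm$), and uses only the Pythagorean-type inequality
\[
\|a-b\|^2 \;\geq\; \|a\|^2 + \|b\|^2 \qquad\text{whenever } \langle a,b\rangle \leq 0
\]
applied to pairs such as $(u_m^+ + u_1^-,\; u_m^- + u_1^+)$ and $(u_m^+,\, u_k^+)$. Two iterations of this inequality squeeze $\|u_m^\pm\|$ below an arbitrary $\varepsilon$, and hence $\|u\| = 0$. The whole argument stays at the level of finite linear combinations and never leaves the original inner product space.

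Your route instead passes to the Hilbert completion and invokes weak sequential compactness together with Mazur's theorem to place the weak limits $u^\pm$ inside the closed convex cone $\ovl{K}$; then the observation $\langle u^+, v_k\rangle \leq 0$ for every $k$ forces $\|u^+\|^2 \leq 0$ directly. This is a cleaner ``limit object'' argument and arguably more conceptual, at the price of heavier functional-analytic tools. (A small remark: weak sequential compactness of bounded sets holds because everything lives in the separable subspace $\ovl{\spn}\{v_i : i\in\N\}$; you might want to say this explicitly.) Both proofs exploit exactly the same structural fact---disjointly supported conic combinations of the $v_i$ have nonpositive inner product---but the paper's version is more elementary, while yours generalizes more readily to settings where one already has weak compactness at hand.
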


\begin{proof}
	Let $u \in \bigcap_n \ovl{\spn}\{v_i : i \geq n\}$ and $\varepsilon \in \R_{>0}$. Thus for every $n \in \N$, there exists $u_n \in \spn\{v_i : i \geq n\}$ such that $\|u-u_n\| < \varepsilon$. Let $C_n$ denote the cone of all (finite) nonnegative linear combinations of $\{v_i : i \geq n\}$. Note that we can write $u_n = u_n^+-u_n^-$, where $u_n^+, u_n^- \in C_n$ and they can be expressed as conic combinations of disjoint subsets of $\{v_i : i \geq n\}$. In particular, $\langle u_n^+, u_n^- \rangle \leq 0$. Let $i_n$	 denote the largest index $i$ such that $v_i$ appears in at least one of the aforementioned conic combinations for $u_n^+$ and $u_n^-$.
		
	Suppose $m > i_1$. Note that $\|u_m-u_1\| < 2\varepsilon$ and thus $\|(u_m^++u_1^-) - (u_m^-+u_1^+)\| < 2\varepsilon$. On the other hand, since $m > i_1$, the vectors $u_m^++u_1^-$ and $u_m^-+u_1^+$ can be expressed as conic combinations of disjoint subsets of $\{v_i : i \in \N\}$. Therefore $\langle u_m^++u_1^-, u_m^-+u_1^+ \rangle \leq 0$, which implies
	\begin{align}
		\|(u_m^++u_1^-) - (u_m^-+u_1^+)\|^2
		&= \|u_m^++u_1^-\|^2 + \|u_m^-+u_1^+\|^2 - 2 \, \langle u_m^++u_1^-, u_m^-+u_1^+ \rangle \nonumber
		\\
		&\geq \|u_m^++u_1^-\|^2 + \|u_m^-+u_1^+\|^2. \label{Eq:ProofOfLinearAlgebraicTailTriviality}
	\end{align}
	In particular, $\|u_m^++u_1^-\| < 2\varepsilon$ and $\|u_m^-+u_1^+\| < 2\varepsilon$. Now suppose $k > i_m$. Since $i_m > i_1$, the same inequalities hold with $m$ replaced by $k$. It follows that $\|u_m^+ - u_k^+\| < 4\varepsilon$ and $\|u_m^- - u_k^-\| < 4\varepsilon$. Since $k > i_m$, we have $\langle u_m^+ , u_k^+ \rangle \leq 0$ and $\langle u_m^- , u_k^- \rangle \leq 0$. By an argument similar to \eqref{Eq:ProofOfLinearAlgebraicTailTriviality}, we can deduce that $\|u_m^+\| < 4\varepsilon$ and $\|u_m^-\| < 4\varepsilon$, which implies $\|u\| < 9\varepsilon$. This inequality holds for every $\varepsilon \in \R_{>0}$ and thus we have $u = 0$.
\end{proof}

\begin{cor}\label{Cor:LinearAlgebraicDecorrelation}
	Let $(v_i)_{i \in \N}$ be a sequence of vectors in an inner product space. If $\langle v_i,v_j \rangle \leq 0$ for all distinct $i,j \in \N$, then for every $n \in \N$ and $\varepsilon \in \R_{>0}$, there exists $N \in \N$ such that
	\begin{align}\label{Eq:LinearAlgebraicDecorrelation}
		\forall u \in \spn\{v_1,\dots,v_n\} \, , \, \forall w \in \ovl{\spn}\{v_i : i \geq N\} \, : \, \big|\langle u,w \rangle\big| < \varepsilon \|u\| \|w\|.
	\end{align}
\end{cor}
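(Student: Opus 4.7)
The plan is to show that as $N \to \infty$, the quantity
\[
\rho(N) := \sup \big\{ |\langle u, w\rangle| : u \in V_n,\; w \in W_N,\; \|u\|=\|w\|=1 \big\}
\]
tends to zero, where $V_n := \spn\{v_1,\ldots,v_n\}$ and $W_N := \ovl{\spn}\{v_i : i \ge N\}$. This is clearly equivalent to \eqref{Eq:LinearAlgebraicDecorrelation}. To have a well-behaved setting for limits, I would pass to the Hilbert space completion $\Hcal$ of the ambient inner product space, in which the $v_i$ still satisfy $\langle v_i, v_j \rangle \le 0$. Reinterpreting the $W_N$ as closures in $\Hcal$ can only enlarge them, so it suffices to prove $\rho(N) \to 0$ in this larger setting; and by \cref{Prop:LinearAlgebraicTailTriviality} applied inside $\Hcal$ (whose proof only uses the inner product structure and the sign condition), we still have $\bigcap_N W_N = \{0\}$.

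Next I would argue by contradiction: suppose there exist $\varepsilon > 0$, indices $N_k \to \infty$, and unit vectors $u_k \in V_n$ and $w_k \in W_{N_k}$ with $|\langle u_k, w_k \rangle| \ge \varepsilon$. Since $V_n$ is finite-dimensional, its unit sphere is norm-compact, so after passing to a subsequence $u_k \to u$ in norm with $\|u\|=1$. Cauchy--Schwarz then gives $|\langle u, w_k \rangle| \ge \varepsilon - \|u - u_k\| \to \varepsilon$, so $|\langle u, w_k \rangle| \ge \varepsilon/2$ for all large $k$.

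To close the argument I would invoke weak compactness in $\Hcal$: the bounded sequence $(w_k)$ admits a weakly convergent subsequence $w_k \rightharpoonup w \in \Hcal$. For each fixed $M$, eventually $N_k \ge M$, so $w_k \in W_{N_k} \subseteq W_M$; since a closed linear subspace of a Hilbert space is weakly closed, $w \in W_M$. Intersecting over $M$ gives $w \in \bigcap_M W_M = \{0\}$, hence $\langle u, w_k \rangle \to 0$, contradicting $|\langle u, w_k \rangle| \ge \varepsilon/2$. The main subtlety is that the ambient space need not be complete and the $w_k$ need not converge in norm; both issues are handled by moving to the completion and combining norm-compactness of the unit sphere in the finite-dimensional $V_n$ with weak compactness of bounded sets in $\Hcal$, the latter being exactly what lets the triviality of $\bigcap_N W_N$ from \cref{Prop:LinearAlgebraicTailTriviality} do its work.
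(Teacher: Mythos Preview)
Your argument is correct. Both your proof and the paper's proceed by contradiction, exploit the norm-compactness of the unit sphere in the finite-dimensional space $V_n = \spn\{v_1,\ldots,v_n\}$ to stabilise the $u$-side, and ultimately appeal to \cref{Prop:LinearAlgebraicTailTriviality}. The difference lies in how the $w$-side is handled. The paper rewrites the desired inequality as $\|P_N u\| < \varepsilon$ for the orthogonal projection $P_N$ onto $W_N$, uses the monotonicity $\|P_k u_m\| \ge \|P_m u_m\|$ for $m \ge k$ to deduce $\|P_k u\| \ge \varepsilon$ for the limit $u$ and all $k$, and then invokes the fact that $P_k$ converges to the projection onto $\bigcap_k W_k = \{0\}$. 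You instead keep explicit witnesses $w_k$ and extract a weak limit via weak sequential compactness of bounded sets in the Hilbert completion, then use weak closedness of the nested subspaces to trap the limit in $\bigcap_M W_M$. Your route makes the passage to the completion explicit (which the paper leaves implicit when invoking orthogonal projections), at the cost of bringing in the weak topology; the paper's projection-monotonicity argument is a touch more elementary but relies on the same Hilbert-space machinery under the hood.
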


\begin{proof}
	For every $m \in \N$, let $W_m = \ovl{\spn}\{v_i : i \geq m\}$ and denote the orthogonal projection onto $W_m$ by $P_{m}$. Note that \eqref{Eq:LinearAlgebraicDecorrelation} holds if and only if $\|P_N u\| < \varepsilon$ for every $u \in \spn\{v_1,\dots,v_n\}$ such that $\|u\|=1$. Suppose to the contrary that for every $m \in \N$, there exists $u_m \in \spn\{v_1,\dots,v_n\}$ such that $\|u_m\| = 1$ and $\|P_m u_m\| \geq \varepsilon$. Since the unit ball in $\spn\{v_1,\dots,v_n\}$ is compact, $(u_m)_{m \in \N}$ has a convergent subsequent, which we denote by $(u_{m_i})_{i \in \N}$. Suppose $u_{m_i} \to u$. Note that $\|P_k u_m\| \geq \|P_m u_m\|$ for any $k$ and $m$ such that $m \geq k$. It follows that $\|P_k u\| = \lim_{i \to \infty} \|P_k u_{m_i}\| \geq \varepsilon$ for all $k \in \N$. But this is in contradiction with \cref{Prop:LinearAlgebraicTailTriviality} because $\lim_{k \to \infty} P_k$ is the orthogonal projection onto $\bigcap_{k=1}^\infty W_k$.
\end{proof}

Now, we are ready to complete the proof of \cref{Th:GaussianTailTriviality}.

\begin{proof}[Proof of \cref{Th:GaussianTailTriviality}]
	Suppose $E \in \sigma(\Zcal)$ is a cylinder event. Thus there exists $n \in \N$ such that $E \in \sigma(Z_1,\dots,Z_n)$. Let $\varepsilon \in \R_{>0}$. By treating $Z_i$'s as the elements of a Hilbert space with inner product $\cov(\cdot,\cdot)$, \cref{Cor:LinearAlgebraicDecorrelation} implies that there exists $N \in \N$ such that
	\begin{align*}
		\sup_{\substack{X \in \spn(Z_1,\dots,Z_n) \\ Y \in \spn(Z_i \, : \, i \geq N)}} \big|\corr(X ,Y)\big| < \varepsilon.
	\end{align*}
	
	Now, suppose $F \in \sigma(Z_i : i \geq N)$. Choose $M \in \N$ large enough such that $M>N$ and there exists $\wt{F} \in \sigma(Z_N,\dots,Z_M)$ that satisfies $\prb(F \triangle \wt{F}) < \varepsilon$. By \cref{Th:Hotelling}, $|\corr(1_E,1_{\wt{F}})| < \varepsilon$. Since $|\corr(1_E,1_{\wt{F}})| \geq 4 \, |\cov(1_E,1_{\wt{F}})|$, we have $|\cov(1_E,1_{\wt{F}})| <\varepsilon/4$. Therefore,
	\begin{align*}
		\big| \cov(1_E,1_F) \big| \leq \big| \cov(1_E,1_{\wt{F}}) \big| + \big| \cov(1_E,1_F-1_{\wt{F}}) \big| < \dfrac{\varepsilon}{4} + \big| \cov(1_E,1_F-1_{\wt{F}}) \big|.
	\end{align*}
	On the other hand,
	\begin{align*}
		\big| \cov(1_E,1_F-1_{\wt{F}}) \big| \leq 2 \, \E\big[ |1_F-1_{\wt{F}}| \big] \leq 2 \, \E\big[ 1_{F \triangle \wt{F}} \big] < 2\varepsilon.
	\end{align*}
	Hence $\big|\cov(1_E,1_F)\big| \leq 9\varepsilon/4$ and the theorem follows by \cref{Prop:Georgii}.
\end{proof}

\subsection{Negatively associated Gaussian threshold processes}\label{Subsec:GaussianThreshold}

An immediate consequence of the tail triviality of NA Gaussian processes is the tail triviality of NA Gaussian threshold processes. In this subsection, we construct an NA Gaussian threshold process that is tail trivial but does not have summable covariances. Thus, in the case of NA $\{0,1\}$-valued processes, the summable covariances property is not a necessary condition for tail triviality. Furthermore, this example shows that the class of NA Gaussian threshold processes is not a subset of the class of strongly Rayleigh measures.

A stochastic process $(X_i)_{i \in \N}$ is a \textit{Gaussian threshold process} if there exists a Gaussian process $(Z_i)_{i \in \N}$ and a sequence of real numbers $(a_i)_{i \in \N}$ such that $X_i = 1_{\{Z_i \geq a_i\}}$ for all $i \in \N$. The following proposition shows that a Gaussian threshold process is NA if and only if its parent Gaussian process is NA.

\begin{prop}\label{Prop:NAGaussianThreshold}
	Let $(Z_i)_{i \in \N}$ be a Gaussian process, $(a_i)_{i \in \N}$ be a sequence of real numbers and $X_i = 1_{\{Z_i \geq a_i\}}$ for all $i \in \N$. The stochastic process $(X_i)_{i \in \N}$ is NA if and only $(Z_i)_{i \in \N}$ is NA.
\end{prop}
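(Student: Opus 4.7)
The forward direction is essentially formal: NA is preserved under applying coordinate-wise non-decreasing functions (property P6 in Joag-Dev and Proschan), and $z \mapsto 1_{\{z \geq a_i\}}$ is non-decreasing, so if $(Z_i)_{i \in \N}$ is NA then so is $(X_i)_{i \in \N}$. I would briefly invoke this without reproducing its proof.

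For the reverse direction, I would use the fact (noted just before \cref{Th:GaussianTailTriviality}) that a Gaussian process is NA iff it is pairwise negatively correlated. So it suffices to show that if $(X_i)$ is NA then $\cov(Z_i, Z_j) \leq 0$ for every pair $i \neq j$. Fix such a pair, and discard the trivial cases where $\var(Z_i)$ or $\var(Z_j)$ is zero (in which case $Z_i$ or $Z_j$ is a.s.\ constant and the covariance is automatically zero). Standardizing to the bivariate normal setting, the assumption $\cov(X_i, X_j) \leq 0$ translates to
\[
	\prb(Z_i \geq a_i, \, Z_j \geq a_j) \leq \prb(Z_i \geq a_i) \, \prb(Z_j \geq a_j).
\]

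The key tool is the classical Plackett-type identity: for jointly Gaussian $(Z_i, Z_j)$ with fixed marginals, the quantity $\prb(Z_i \geq a_i, \, Z_j \geq a_j)$ is a strictly increasing function of $\rho = \cov(Z_i, Z_j)$, with derivative equal to the bivariate normal density at $(a_i, a_j)$ (which is strictly positive). At $\rho = 0$ the two events are independent, so the joint probability equals the product of marginals. Combined with the inequality above, this forces $\rho \leq 0$, giving $\cov(Z_i, Z_j) \leq 0$ as required.

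The only real obstacle is invoking the monotonicity cleanly; I expect to cite it as a standard identity (it is essentially Slepian's inequality applied to a bivariate normal, or a direct differentiation of the bivariate normal CDF with respect to the correlation) rather than rederive it. Edge cases (degenerate variances, or $a_i$ making $\prb(X_i = 1) \in \{0,1\}$) are disposed of in a sentence since they make the covariance conditions trivial. Putting the two directions together yields the proposition.
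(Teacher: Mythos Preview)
Your proposal is correct and follows essentially the same route as the paper: both directions are handled identically (preservation of NA under coordinatewise monotone maps for the forward direction; reducing to pairwise negative correlation via the Joag-Dev--Proschan characterization of Gaussian NA for the reverse). The only difference is cosmetic: where the paper simply asserts that $\cov(X_i,X_j) \leq 0$ iff $\cov(Z_i,Z_j) \leq 0$ ``since $X_k$ is an increasing function of $Z_k$'', you make the underlying monotonicity explicit via the Plackett/Slepian identity, which is arguably a cleaner justification of the same fact.
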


\begin{proof}
	Since every coordinatewise increasing function of $(X_i)_{i \in \N}$ lifts to a coordinatewise increasing function of $(Z_i)_{i \in \N}$, negative association of $(Z_i)_{i \in \N}$ implies negative association of $(X_i)_{i \in \N}$. Now we consider the ``only if'' direction. Since $X_k$ is an increasing function of $Z_k$ for all $k \in \N$, we have $\cov(X_i,X_j) \leq 0$ if and only if $\cov(Z_i,Z_j) \leq 0$. Therefore, if $(X_i)_{i \in \N}$ are NA (and thus negatively correlated) then $(Z_i)_{i \in \N}$ are negatively correlated. Hence, by the theorem of Joag-Dev and Proschan, $(Z_i)_{i \in \N}$ is NA.
\end{proof}

The following lemma estimates the effect of thresholding at 0 on the covariance structure of a Gaussian process.

\begin{lem}\label{Lem:GaussianThresholdCov}
	Let $(Y_1,Y_2) \in \R^2$ be a Gaussian vector with $\E[Y_1] = \E[Y_2] = 0$ and $\var(Y_1) = \var(Y_2) = 1$. If $\cov(Y_1,Y_2) = \rho$ then $\cov\big(1_{\{Y_1\geq0\}},1_{\{Y_2\geq0\}}\big) = (1/2\pi) \rho + o(\rho)$ as $\rho \to 0$.
\end{lem}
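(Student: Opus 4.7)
The plan is to derive the exact identity (essentially Sheppard's formula)
\[
	\prb(Y_1 \geq 0,\, Y_2 \geq 0) = \tfrac{1}{4} + \tfrac{1}{2\pi}\arcsin(\rho),
\]
from which the covariance equals $\tfrac{1}{2\pi}\arcsin(\rho)$, and then apply the Taylor expansion $\arcsin(\rho) = \rho + O(\rho^3)$ to obtain $\tfrac{\rho}{2\pi} + o(\rho)$.

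To establish the identity, I would represent $(Y_1, Y_2)$ in terms of two independent standard Gaussians $U, V$. Writing $\rho = \sin\beta$ with $\beta = \arcsin\rho \in [-\pi/2, \pi/2]$ and $\sqrt{1-\rho^2} = \cos\beta$, the vector $(Y_1, Y_2)$ has the same law as $(U,\, \sin\beta \cdot U + \cos\beta \cdot V)$. Pass to polar coordinates $U = R\cos\theta$, $V = R\sin\theta$, where $R \geq 0$ and $\theta$ is uniform on $[0, 2\pi)$ and independent of $R$. Then the event $\{Y_1 \geq 0,\, Y_2 \geq 0\}$ translates to $\{\cos\theta \geq 0\} \cap \{\sin(\theta + \beta) \geq 0\}$, i.e., $\theta$ lying in the intersection of two half-circles whose boundaries differ by an angle of $\beta$.

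A short case analysis on the sign of $\beta$ shows this intersection is an arc of length $\tfrac{\pi}{2} + \beta$, so the probability equals $\tfrac{1}{2\pi}\bigl(\tfrac{\pi}{2} + \arcsin\rho\bigr) = \tfrac{1}{4} + \tfrac{1}{2\pi}\arcsin\rho$. Since $\prb(Y_1 \geq 0) = \prb(Y_2 \geq 0) = \tfrac{1}{2}$, subtracting the product of marginals gives
\[
	\cov\bigl(1_{\{Y_1 \geq 0\}},\, 1_{\{Y_2 \geq 0\}}\bigr) = \tfrac{1}{2\pi}\arcsin\rho,
\]
and expanding $\arcsin\rho = \rho + \tfrac{\rho^3}{6} + \cdots$ as $\rho \to 0$ yields the claimed asymptotic.

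The only delicate step is the case analysis that pins down the arc length, which is routine but needs to be done carefully since the two half-circles can overlap in two ways depending on $\mathrm{sgn}(\beta)$; both yield the same length $\tfrac{\pi}{2} + \beta$. Everything else is bookkeeping.
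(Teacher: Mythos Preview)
Your proof is correct and follows essentially the same route as the paper: both represent $(Y_1,Y_2)$ via independent standard Gaussians, exploit the rotational invariance of the bivariate standard normal to obtain the exact identity $\cov(1_{\{Y_1\geq 0\}},1_{\{Y_2\geq 0\}})=\tfrac{1}{2\pi}\arcsin\rho$, and then Taylor-expand. The only cosmetic difference is that you compute the orthant probability by measuring an arc in polar coordinates, whereas the paper reaches the same $\arcsin$ via the Cauchy distribution of the ratio $Z_2/Z_1$; these are equivalent uses of the same symmetry.
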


\begin{proof}
	Note that
	\begin{align*}
		(Y_1,Y_2) \sim \big(Z_1 , \rho Z_1+\sqrt{1-\rho^2} Z_2\big),
	\end{align*}
	where $(Z_1,Z_2) \in \R^2$ is a standard Gaussian vector. Therefore,
	\begin{align}\label{Eq:GaussianThresholdCov}
		\cov\big(1_{\{Y_1 \geq 0\}},1_{\{Y_2 \geq 0\}}\big)
		&= \prb(Y_1 \geq 0, Y_2 \geq 0) - \prb(Y_1 \geq 0) \, \prb(Y_2 \geq 0) \nonumber
		\\
		&= \prb\bigg( Z_1 \geq 0, \dfrac{Z_2}{Z_1} \geq \dfrac{-\rho}{\sqrt{1-\rho^2}} \bigg) - \dfrac{1}{4}.
	\end{align}
	Since $(Z_1,Z_2)$ is symmetrically distributed with respect to the origin,
	\begin{align*}
		\prb\bigg( Z_1 \geq 0, \dfrac{Z_2}{Z_1} \geq \dfrac{-\rho}{\sqrt{1-\rho^2}} \bigg) = \dfrac{1}{2} \; \prb\bigg( \dfrac{Z_2}{Z_1} \geq \dfrac{-\rho}{\sqrt{1-\rho^2}} \bigg).
	\end{align*}
	Substituting in \eqref{Eq:GaussianThresholdCov} gives
	\begin{align*}
		\cov\big(1_{\{Y_1 \geq 0\}},1_{\{Y_2 \geq 0\}}\big)
		&= \dfrac{1}{2} \Bigg( 1 - \bigg( \dfrac{1}{\pi} \arctan\!\bigg(\dfrac{-\rho}{\sqrt{1-\rho^2}}\bigg) + 1 \bigg) \Bigg) - \dfrac{1}{4}
		\\
		&= \dfrac{1}{2\pi} \, \arcsin(\rho).
	\end{align*}
	The lemma follows since $\arcsin(\rho) = \rho + o(\rho)$ as $\rho \to 0$.
\end{proof}

An immediate consequence of this lemma is that an NA Gaussian process $(Z_i)_{i \in \N}$ has summable covariances if and only if $\big( 1_{\{Z_i \geq 0\}} \big)_{i \in \N}$ has summable covariances. In the following example, we use this fact to construct an NA Gaussian threshold process that does not satisfy the summable covariances property.

\begin{ex}
	Suppose that $(Z_i)_{i \in \N}$ is a standard Gaussian process. For every $i \in \N$, define
\begin{align*}
	Y_i = -\dfrac{1}{i}Z_1 - \dots - \dfrac{1}{i}Z_{i-1} + Z_i.
\end{align*}
	Note that $(Y_i)_{i \in \N}$ is a Gaussian process and $\cov(Y_i,Y_j) = -1/ij$ for all distinct $i,j \in \N$. Therefore, $(Y_i)_{i \in \N}$ is NA and thus, by \cref{Th:GaussianTailTriviality}, it is tail trivial. Also, we have $\sum_j \big|\cov(Y_i,Y_j)\big| = \infty$ for all $i \in \N$.

	Define $X_i = 1_{\{Y_i \geq 0\}}$ for every $i \in \N$. By \cref{Prop:NAGaussianThreshold}, $(X_i)_{i \in \N}$ is NA. Also, it inherits tail triviality from $(Y_i)_{i \in \N}$. By \cref{Lem:GaussianThresholdCov}, $\sum_j \big|\cov(X_i,X_j)\big| = \infty$ for all $i \in \N$. Thus $(X_i)_{i \in \N}$ is NA and tail trivial, but it does not satisfy the summable covariances property.
\end{ex}

For the process $(X_i)_{i \in \N}$, constructed above, we have $\sum_{j=1}^n \big|\cov(X_i,X_j)\big| = \Theta(\log(n))$ for all $i \in \N$. A natural question is: How large can the growth rate of the covariance sums of an NA process be? The following lemma provides a bound.

\begin{lem}\label{Lem:CovSumOfPNCMeasures}
	If $X_1,\dots,X_n$ are negatively correlated Bernoulli random variables, then
	\begin{align*}
		\forall i,n \in \N \, : \, \sum_{j=1}^n \big| \cov(X_i,X_j) \big| \leq \dfrac{1}{4}+\dfrac{3}{4}\sqrt{n}.
	\end{align*}
\end{lem}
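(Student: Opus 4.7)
The approach is to split the sum into its diagonal part $\cov(X_i,X_i) = \var(X_i)$ and its off-diagonal part, and to bound the off-diagonal part by collapsing it into a single covariance that can be controlled by Cauchy--Schwarz. The key observation is that pairwise negative correlations force $\cov(X_i,X_j)$ to have the same sign for every $j \neq i$, so absolute values inside the sum can be replaced by a single covariance.

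Concretely, fix $i \in [n]$ and set $S = \sum_{j \in [n] \setminus \{i\}} X_j$. Because $\cov(X_i,X_j) \leq 0$ for all $j \neq i$, we have
\begin{align*}
	\sum_{j \in [n] \setminus \{i\}} \big|\cov(X_i,X_j)\big| = -\cov(X_i,S) = \big|\cov(X_i,S)\big|.
\end{align*}
By Cauchy--Schwarz (equivalently, by the nonnegativity of $\var(X_i + \alpha S)$ as a quadratic in $\alpha$), this is at most $\sqrt{\var(X_i)\,\var(S)}$. Now $\var(X_i) \leq 1/4$ since $X_i$ is Bernoulli, and
\begin{align*}
	\var(S) = \sum_{j \neq i} \var(X_j) + \sum_{\substack{j,k \neq i \\ j \neq k}} \cov(X_j,X_k) \leq \sum_{j \neq i} \var(X_j) \leq \frac{n-1}{4},
\end{align*}
where we again used the pairwise negative correlations assumption to discard the cross terms.

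Combining these bounds and adding back the diagonal contribution $\var(X_i) \leq 1/4$ gives
\begin{align*}
	\sum_{j=1}^n \big|\cov(X_i,X_j)\big| \leq \frac{1}{4} + \sqrt{\frac{1}{4} \cdot \frac{n-1}{4}} = \frac{1}{4} + \frac{\sqrt{n-1}}{4},
\end{align*}
which is in fact stronger than the stated $\frac{1}{4} + \frac{3}{4}\sqrt{n}$. I do not foresee a real obstacle: the whole argument is a one-line Cauchy--Schwarz bound once one notices that the absolute values can be stripped by exploiting the fixed sign of each $\cov(X_i,X_j)$. The only care needed is in expanding $\var(S)$ and using pairwise negative correlations twice---once to say $\cov(X_i,S) = -\sum_{j \neq i} |\cov(X_i,X_j)|$ and once to bound $\var(S)$ by the sum of individual variances.
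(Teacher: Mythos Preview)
Your proof is correct and follows essentially the same idea as the paper's: both exploit the nonnegativity of $\var\!\big(X_i + \alpha \sum_{j\neq i} X_j\big)$ to control the off-diagonal covariance sum. The paper fixes $\alpha = 1/\sqrt{n}$ and then invokes \cref{Lem:PairwiseCovSumOfPNCMeasures} to handle the resulting $\frac{1}{n}\sum_{j,k\neq i}\cov(X_j,X_k)$ term, which leads to the constant $3/4$. You instead apply the full Cauchy--Schwarz inequality (i.e., optimize over $\alpha$) and bound $\var(S)$ directly by dropping the nonpositive cross terms; this avoids the auxiliary lemma and yields the sharper constant $\tfrac{1}{4}+\tfrac{1}{4}\sqrt{n-1}$.
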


\begin{proof}
	We have
	\begin{align*}
		0
		\leq \var \Big( X_i + \dfrac{1}{\sqrt{n}} \sum_{\substack{j=1 \\ j \neq i}}^n X_j \Big)
		= \var(X_i) + \dfrac{1}{n} \sum_{\substack{j=1 \\ j \neq i}}^n \var(X_j) &+ \dfrac{1}{\sqrt{n}} \sum_{\substack{j=1 \\ j \neq i}}^n \cov(X_i,X_j)
		\\
		&+ \dfrac{1}{n} \sum_{\substack{j,k = 1 \\ j,k \neq i}}^n \cov(X_k,X_j).
	\end{align*}
	The lemma follows using \cref{Lem:PairwiseCovSumOfPNCMeasures} and the fact that $\var(X_i) \leq 1/4$ for all $i \in [n]$.
\end{proof}

The preceding lemma implies that the growth rate of the covariance sums of a negatively correlated processes can be at most of order $\sqrt{n}$. In the next example, we construct a vector of NA Bernoulli random variables that attains this bound.

\begin{ex}
	Let $(Z_1,\dots,Z_n)$ be a standard Gaussian vector and define $Z_0 = -(1/\sqrt{n}) \sum_i Z_i$. We have $\cov(Z_0,Z_i) = -(1/\sqrt{n})$ for every $i \in [n]$ and $\cov(Z_i,Z_j) = 0$ for all distinct $i,j \in [n]$. Therefore, $(Z_0,\dots,Z_n)$ is NA and $\sum_{i=1}^n \cov(Z_0,Z_i) = -\sqrt{n}$. Now, define $X_i = 1_{\{Z_i \geq 0\}}$ for $i = 0,\dots,n$. By \cref{Prop:NAGaussianThreshold}, $(X_0,\dots,X_n)$ is NA and by \cref{Lem:GaussianThresholdCov}, $\sum_{i=1}^n \big|\cov(X_0,X_i)\big| = \Theta\big(\sqrt{n}\big)$.
\end{ex}

By \eqref{Eq:SRCovSum}, the Gaussian threshold vector $(X_0,\dots,X_n)$, constructed in the preceding example, is not strongly Rayleigh for large $n$---in fact, this is true for $n \geq 3$.

\begin{cor}\label{Cor:NANotSR}
	NA Gaussian threshold measures are not in general strongly Rayleigh.
\end{cor}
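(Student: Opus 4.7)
The plan is to read equation \eqref{Eq:SRCovSum} against the example immediately preceding the corollary. That inequality says that any strongly Rayleigh vector $(X_0,\dots,X_n)$ must satisfy $\sum_{j=0}^{n} |\cov(X_0,X_j)| \leq 1/2$, while for the constructed Gaussian threshold vector this sum will turn out to grow like $\sqrt{n}$. Since the vector $(X_0,\dots,X_n)$ has already been shown to be NA via \cref{Prop:NAGaussianThreshold}, once one $n$ is exhibited for which the SR bound fails, we will have produced the desired NA Gaussian threshold measure that is not strongly Rayleigh, proving the corollary.

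Concretely, I would extract the exact identity $\cov(1_{\{Y_1\geq 0\}},1_{\{Y_2 \geq 0\}}) = (1/2\pi)\arcsin(\rho)$ obtained inside the proof of \cref{Lem:GaussianThresholdCov}, and specialize it to $\rho = \cov(Z_0,Z_i) = -1/\sqrt{n}$ to get $|\cov(X_0,X_i)| = (1/2\pi)\arcsin(1/\sqrt{n})$ for every $i \in [n]$. Combined with $\var(X_0)=1/4$ this yields
\[
	\sum_{j=0}^n \big|\cov(X_0,X_j)\big| = \frac{1}{4} + \frac{n}{2\pi}\arcsin\!\bigl(1/\sqrt{n}\bigr).
\]

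Finally, I would use the elementary bound $\arcsin(x) \geq x$ on $[0,1]$ to lower-bound this sum by $1/4 + \sqrt{n}/(2\pi)$, which strictly exceeds $1/2$ as soon as $\sqrt{n}>\pi/2$, i.e.\ for $n\geq 3$. Comparing with \eqref{Eq:SRCovSum} shows that $(X_0,\dots,X_n)$ cannot be strongly Rayleigh for any $n\geq 3$, which establishes the corollary (and even the stronger parenthetical claim in the paragraph preceding the statement). There is no substantive obstacle here: the only quantitative input beyond \eqref{Eq:SRCovSum} and the Gaussian threshold covariance formula is the numerical comparison $\sqrt{n}>\pi/2$ for $n\geq 3$.
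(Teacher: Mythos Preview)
Your proposal is correct and follows exactly the paper's own argument: compare the covariance sum of the Gaussian threshold vector from the preceding example against the strongly Rayleigh bound \eqref{Eq:SRCovSum}. The only difference is that you spell out the numerical verification of the parenthetical claim $n\geq 3$ via the exact identity $\cov(X_0,X_i)=(1/2\pi)\arcsin(-1/\sqrt{n})$ and the inequality $\arcsin(x)\geq x$, whereas the paper simply asserts this threshold.
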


It is known that the strong Rayleigh property is strictly stronger than negative association (see \cite{Borcea2009Negative}); however, the currently known examples are highly contrived. Pemantle raised the question of finding natural classes of NA measures that are not strongly Rayleigh. He proposed the class of NA Gaussian threshold measures as a possible candidate. \cref{Cor:NANotSR} shows that these measures indeed are not a subset of strongly Rayleigh measures.

\subsubsection*{Acknowledgment}

We would like to thank Jeffrey Steif for suggesting the example at the end of the introduction, which shows that tail triviality does not follow from pairwise independence even under strong stationarity. We also wish to thank Yuzhou Gu who pointed out a mistake in an earlier version of this paper.

\printbibliography

\end{document}